\DeclareMathAlphabet{\eufrak}{U}{}{}{}  % Euler fraktur math
\SetMathAlphabet\eufrak{normal}{U}{euf}{m}{n}
\SetMathAlphabet\eufrak{bold}{U}{euf}{b}{n}
\numberwithin{equation}{section}
\newenvironment{Proof}{\removelastskip\par\medskip
\noindent{\em Proof.}
\rm}{\hfill$\square$\\\par\medbreak}
\def\real{{\mathord{{\rm I\kern-2.8pt R}}}}        % Fake blackboard bold R.
\def\inte{{\mathord{{\rm I\kern-2.8pt N}}}}
\def\PP{{\mathord{{\rm I\kern-2.8pt P}}}}
\def\real{{\mathord{\mathbb R}}}
\def\inte{{\mathord{\mathbb N}}}
\def\R{\right}
\def\L{\left}
\newcommand{\e}{\varepsilon}
\def\P{\mathbb{P}}
\def\E{\mathop{\hbox{\rm I\kern-0.20em E}}\nolimits}
\def\Q{\mathbb{Q}}
\newtheorem{prop}{Proposition}[section]
\newtheorem{lemma}[prop]{Lemma}
\newtheorem{theorem}[prop]{Theorem}
\newtheorem{remark}[prop]{Remark}
\def\e{\varepsilon}
\def\de{\delta}
\author{{Anthony R\'eveillac}\footnote{CEREMADE, CNRS UMR 7534, Place du Mar\'echal De Lattre De Tassigny,
75775 PARIS CEDEX 16 - FRANCE, anthony.reveillac@ceremade.dauphine.fr}\\ Universit\'e Paris Dauphine and Humboldt-Universit\"at zu Berlin }
\title{Weak martingale representation for continuous Markov processes and application to quadratic growth BSDEs}
\begin{document}

\maketitle
 
\begin{abstract}
\noindent
In this paper we prove that every random variable of the form $F(M_T)$ with $F:\real^d \to\real$ a Borelian map and $M$ a $d$-dimensional continuous Markov martingale with respect to a Markov filtration $\mathcal{F}$ admits an exact integral representation with respect to $M$, that is, without any orthogonal component. This representation holds  true regardless any regularity assumption on $F$. We extend this result to Markovian quadratic growth BSDEs driven by $M$ and show they can be solved without an orthogonal component. To this end, we extend first existence results for such BSDEs under a general filtration and then obtain regularity properties such as differentiability for the solution process.       
\end{abstract} 

\noindent
AMS 2010 subject classifications: 60J25, 60H05, 60H10\\
Key words and phrases: Martingale representation, existence of quadratic BSDEs, differentiability of BSDEs, continuous Markov martingale.

\section{Introduction}

One of the most useful and striking property in stochastic calculus is probably the \textit{martingale representation property} (MRP). Given a $d$-dimensional martingale $M:=(M^1,\cdots,M^d)$ with respect to a filtered probability space $(\Omega,\mathcal{F}_T,\mathcal{F}:=(\mathcal{F}_t)_{t\in [0,T]}),\P)$, we say that $M$ enjoys MRP if for every $\mathcal{F}$-(local) martingale $Y$, there exists an integrable predictable process $Z$ such that $Y$ can be decomposed as:
\begin{equation}
\label{eq:intro:repr}
Y=Y_0+\int_0^\cdot Z_s dM_s.
\end{equation} 
Alternatively, MRP entails that for every integrable $\mathcal{F}_T$-measurable random variable $\zeta$, there exists an integrable predictable process $Z$ such that 
\begin{equation}
\label{eq:intro:Clark}
\zeta = Y_0+\int_0^T Z_s dM_s,
\end{equation}
which is a direct consequence of \eqref{eq:intro:repr} by representing the martingale $Y:=\E[\zeta\vert \mathcal{F}_\cdot]$. This second formulation is well-known under the name of Clark-Haussmann-Ocone formula. 
Relations of the type \eqref{eq:intro:repr}-\eqref{eq:intro:Clark} are very useful in applications, like for example in Financial Mathematics, where $\zeta$ represents a contingent claim and $Z$ a strategy which loosely speaking allows one to replicate "optimally" (in some sense to be precised) in a dynamic and predictable way this claim. MRP is a very strong property and unfortunately usually fails to hold for a given martingale $M$. Indeed, according to the, by now, classical theory (see \textit{e.g.} \cite[Theorem 4.6]{RevuzYor} or \cite{Jacod}), MRP is basically equivalent to the fact that $\P$ is an extreme point of the set of martingale measures for $M$. Since this is usually not the case, a relation of the form \eqref{eq:intro:repr} can not hold true for a given $\mathcal{F}$-martingale $Y$ and one as to consider in addition of $Z$, a martingale $N$ strongly orthogonal to $M$ (\textit{i.e.} $\langle M^i, N \rangle=0, \;i=1,\cdots,d$) such that \eqref{eq:intro:repr} is replaced with the so-called Galtchouk-Kunita-Watanabe decomposition:
\begin{equation}
\label{eq:intro:reprbis}
Y=Y_0+\int_0^\cdot Z_s dM_s + N.
\end{equation}
Note, besides, that the absence of MRP for a martingale $M$ is not a quantitative statement, that is, we do not know a priori which are the martingales $Y$ for which the component $N$ is really needed. This remark leads to the following question. Can we characterize the martingales $Y$ (or the random variables $\zeta$) on $(\Omega,\mathcal{F}_T,\mathcal{F}=(\mathcal{F}_t)_{t\in [0,T]}),\P)$ for which a representation of the form \eqref{eq:intro:repr} (or \eqref{eq:intro:Clark}) holds? Or, at least, can we provide a class of martingales $Y$ or of variables $\zeta$ which fulfill \eqref{eq:intro:repr}-\eqref{eq:intro:Clark}? An even more complex question, related to the first one, is to study a generalization of martingale representation property, namely to study existence/uniqueness/regularity of solutions of Backward Stochastic Differential Equations (BSDEs), which in our context, given an $\mathcal{F}_T$-measurable random variable $\zeta$ and a predictable process $f:[0,T]\times\real\times\real \to \real$, consists in finding a triple $(Y,Z,N)$ such that the following equation is satisfied:
\begin{equation}
\label{eq:BSDEIntro}
Y_t=\zeta +\int_t^T f(s,Y_s,Z_s) d \langle M, M \rangle_s -\int_t^T Z_s dM_s - \int_t^T dN_s, \quad \forall t\in [0,T]
\end{equation}
where $Y$ and $Z$ are predictable processes and $N$ is a martingale strongly orthogonal to $M$ (here for simplicity we wrote the equation for $d=1$). One is then interested, in giving conditions on the data of the equation, namely, the \textit{terminal condition} $\zeta$ and on the \textit{generator (or driver)} $f$, under which the orthogonal component of the solution $N$ vanishes.  
\\\\
\noindent                       
The objective of this paper is three fold. First for specific martingales $M$, we provide a class of martingales $Y$ or of random variables $\zeta$ for which their representation with respect to $M$ holds without any orthogonal component like in \eqref{eq:intro:repr} or \eqref{eq:intro:Clark}. More precisely, assuming that $M$ is a square-integrable martingale which is at the same time a strong Markov process with respect to a filtration $(\mathcal{F}_t)_{t\in [0,T]}$, we prove in Theorem \ref{th:main} that every integrable random variable of the form $F(M_T)$ where $F:\real^d \to \real$ is \textit{just} a Borelian function admits a representation of the form \eqref{eq:intro:Clark}, and also that \eqref{eq:intro:repr} holds for the martingale $Y:=\E[F(M_T) \vert \mathcal{F}_\cdot]$. At this point, we stress that no regularity assumption whatsoever is assumed on the map $F$. We will say that $M$ admits \textit{a weak martingale representation property} since all martingales of the form $Y:=\E[F(M_T) \vert \mathcal{F}_\cdot]$ admits an integral representation against $M$ without an orthogonal component. In a second time, we turn to the same type of properties for quadratic growth BSDEs (qgBSDEs for short), meaning that the map $f$ in \eqref{eq:BSDEIntro} has quadratic growth in the $z$ variable. Before, proving that the component $N$ of the solution vanishes (under some conditions) we have to fill a gap in the existence theory of such BSDEs and we have to prove existence of solutions in that context. Note that this is not covered by the literature up to now, since the only existence results in this area are those obtained by Morlais \cite{Morlais} and very recently by Barrieu and El Karoui \cite{BarrieuElKaroui} under the assumption that the filtration $\mathcal{F}$ is continuous (so there are no discontinuous martingales on such spaces). This assumption is really needed in both mentioned papers. Hence in our second main result: Theorem \ref{th:existence}, we fill this gap and prove that under a general right-continuous filtration $\mathcal{F}$, equations of the form \eqref{eq:BSDEIntro} admit at least one solution $(Y,Z,N)$ (in the good space) when the terminal condition $\zeta$ is a bounded random variable and the driver $f$ has quadratic growth in $z$. Note that for this property we simply assume that $M$ is a continuous martingale under $\mathcal{F}$ and in particular we do not assume that $M$ is a Markov process. We are able to prove this result by combining arguments of \cite{BarrieuElKaroui} and of \cite{Morlais}, and by replacing a monotone stability result obtained in \cite{BarrieuElKaroui} for a special class of continuous semimartingales with a compactness type argument derived by Barlow and Protter in \cite{BarlowProtter} valid for general semimartingales. Finally, the third main result of this paper is to show in Theorem \ref{theorem:main} that in a continuous Markovian setting (\textit{i.e.} $M$ is a continuous martingale and a Markov process with respect to $\mathcal{F}$; and the terminal condition $\zeta$ is of the form $F(X_T,M_T)$ with $F$ any bounded Borelian map, and $X$ denotes the strong solution of an SDE driven by $M$), the solution $N$ vanishes (we refer to Section \ref{section:without} for a precise statement). This property requires additional results on the regularity of the solution $(Y,Z,N)$ (given in Sections \ref{sub:Markov} and \ref{sub:Regul}) which once again are not contained in the literature.\\\\               
\noindent
We would like to make some comments about results in the literature. While we were writing the Note \cite{ReveillacOrtho} which was a pre-version of the present paper, we realized that related results have been obtained in the literature. For instance we mention the paper by Jacod, M\'el\'eard and Protter \cite{JacodMeleardProtter} where the authors prove (among other things) a Clark-Haussmann representation formula for random variables of the form $F(M_T)$ where $M$ is a c\`adl\`ag Markov martingale and $F$ is a deterministic map regular enough. Basically the Markov setting allows one to represent the martingale $Y:=\E[F(M_T) \vert \mathcal{F}_\cdot]$ as a deterministic function $u$ of time and $M$ (\textit{i.e.}, $Y_t=u(t,M_t)$). Then the smoothness on $F$, transfers to $u$ so that one can deduce that the orthogonal component $N$ in \eqref{eq:intro:reprbis} vanishes. In \cite{JacodMeleardProtter}, the authors basically assume that $F$ is such that $u$ is differentiable in time and twice differentiable in space. As we will see, in the continuous case this regularity is not needed and especially the regularity in time. Our method is also based on the representation of $Y$ as $u(t,M_t)$ but our analysis differs from the one presented in \cite{JacodMeleardProtter}. Another technology presented in the literature consists in combining the Markovian structure and the Malliavin calculus for some particular c\`adl\`ag Markov martingales to get an exact representation for $F(M_T)$ but once again under some regularity properties for $F$ (especially if $M$ is continuous), we refer to the monograph by Privault \cite[Section 3.7]{PrivaultLN}.\\\\
\noindent
We proceed as follows. First in Section \ref{section:preliminaries} we present the main notations and definitions that we will be used in our framework. Then, we derive in Section \ref{section:wmr} the representation property for a continuous Markov martingale. Then in Section \ref{section:existence} we give an existence result for a qgBSDEs driven by a continuous martingale with respect to a \textit{not necessarily continuous} filtration $\mathcal{F}$. Finally, we prove in Section \ref{section:BSDEwithout} that such BSDEs can be solved without any orthogonal component in a Markovian context for general terminal conditions.        

\section{Preliminaries}
\label{section:preliminaries}

Fix $T$ in $(0,\infty)$. Let $M:=(M_t)_{t\in [0,T]}$ be a $d$-dimensional continuous square integrable martingale ($d\geq 1$) with respect to a right-continuous completed filtration $(\mathcal{F}_t)_{t\in [0,T]}$ (so satisfying the usual conditions), both defined on a probability space $(\Omega,\mathcal{F},\P)$. The expectation with respect to $\P$ will be denoted by $\E$. In the following we will assume that $\mathcal{F}=\mathcal{F}_T$. The Kunita-Watanabe inequality implies that there exists a $\real^{d \times d}$-valued predictable process $q:=(q_t)_{t\in [0,T]}$ such that $ [ M, M ]_t=\int_0^t q_s q_s^\ast dC_s, \; t\in [0,T] $
with $C:=\arctan(\sum_{i=1}^d [ M^{(i)}, M^{(i)} ])$ (for more details we refer to \cite{Morlais}) and where $q^\ast$ denotes the transpose of the matrix $q$. Throughout this paper, $[P_1,P_2]$ will denote the quadratic co-variations between two semi-martingales $P_1$ and $P_2$, and $\langle P_1,P_2\rangle$ stands for its compensator. For a given martingale $P$, we denote by $P^c$ the continuous martingale part of $P$. In addition if $P$ is continuous, we denote by $\mathcal{E}(P)$ the stochastic exponential of $P$, \textit{i.e.} the stochastic process defined as: $\mathcal{E}(P):=\exp(P-\frac12 \langle P, P \rangle)$. Remark that by definition, $q^\ast q$ is a positive semidefinite matrix and thus by Cholesky's decomposition we can assume that $q$ is a lower triangular matrix with non-negative diagonal entries meaning that $q$ itself is positive semidefinite. We use the notation $|\cdot|$ for the Euclidian norm on $\real^d$.
We introduce several spaces of interest in our context. For any $p>1$, we denote by $\mathcal{S}^p$ the set of one-dimensional predictable processes $Y$ such that $\E[(\sup_{t\in [0,T]} |Y_t|^2)^{p/2}]<\infty$, by $\mathcal{H}^p$ the set of $d$-dimensional predictable processes $Z$ such that $\E\left[\left(\int_0^T |Z_s q_s^\ast|^2 dC_s\right)^{p/2} \right]<\infty$, and by $\mathcal{O}^p$ the space of one-dimensional c\`adl\`ag martingales $N$ strongly orthogonal to $M$ (\textit{i.e.} $\langle M^{i}, N \rangle=0, \; i=1,\ldots,d$) such that $N_0=0, \; \P$-a.s. and such that $\E[[N]_T^{p/2}]<\infty$. Note that since $M$ is continuous, we also have for $N$ in $\mathcal{O}^p$ that $[M^{i}, N]=0, \; i=1,\ldots,d$. 
By Galtchouk-Kunita-Watanabe's decomposition, every square integrable c\`adl\`ag martingale on $(\Omega,\mathcal{F},(\mathcal{F}_t)_{t\in [0,T]},\P)$ starting from zero at time zero can be decomposed as: 
$$ \int_0^\cdot Z_s dM_s +N = \sum_{i=1}^d \int_0^\cdot Z_s^i dM^i_s +N $$ 
where $Z=(Z^1,\ldots,Z^d)$ is a $d$-dimensional process in $\mathcal{H}^2$ and $N$ belongs to $\mathcal{O}^2$. In this paper, by martingale we always mean a martingale with respect to the filtration $(\mathcal{F}_t)_{t\in [0,T]}$.  
\\\\
\noindent
At the exception of Section \ref{section:existence}, we will always assume in addition, that $M$ is a strong Markov process with respect to $(\mathcal{F}_t)_{t\in [0,T]}$ and that the filtration is a Markov filtration in the sense of \cite[(3.4)]{CinlarJacodProtterSharpe}. For $(t,m)$ in $[0,T]\times \real$ we denote by $M^{t,m}$ the process $M$ starting at $m$ at time $t$ defined as $ M_s^{t,m}:=m+M_s-M_t, \; s \in [t,T].$ For any stochastic process $\alpha=(\alpha_t)_{t\in [0,T]}$, we write $\alpha \equiv 0$ for $\alpha_t=0, \; d\P\otimes dC_t-a.s.$.\\\\
\noindent 
Throughout this paper, $c$ will denote a positive constant which can differ from line to line. Given non-negative integers $p, q, r$, we set $C^{p,q}([0,T] \times \real^{r})$ the set of Borelian functions $u:[0,T]\times \real^r \to \real$ such that the map $ [0,T] \ni t \mapsto u(t,x)$ is $p$ times continuously differentiable for every element $x$ in $\real^r$, and the map $ \real^r \ni x \mapsto u(t,x)$ is $q$ times continuously differentiable for every $t$ in $[0,T]$. We finally introduce the space of BMO martingales. A c\`adl\`ag martingale $P$ is said to be a BMO martingale is the there exists a constant $\alpha>0$ such that $|\Delta P|^2\leq \alpha $ (where $\Delta P$ denotes the jumps of $P$) and 
$$ \textrm{esssup}_{\tau\in [0,T]}\E[[P]_T-[P]_\tau] \leq \alpha, \quad \P-a.s. $$
where the essential supremum is taken over all the $\mathcal{F}$-stopping times $\tau$ in $[0,T]$. 

\section{Weak martingale representation property for continuous Markov processes}
\label{section:wmr}

This section is devoted to prove the following theorem which constitutes one of the main results of this paper. We use the definitions, assumptions and notations of Section \ref{section:preliminaries}.

\begin{theorem}
\label{th:main}
Let $F:\real^d \to\real$ be a Borelian function such that $F(M_T)$ is a square integrable $\mathcal{F}_T$-measurable random variable. Let $Y$ be the square integrable martingale defined as $Y:=\E[F(M_T)\vert \mathcal{F}_\cdot]$. Then $Y$ admits the following martingale representation (without orthogonal component):
$$ Y= Y_0 + \int_0^\cdot Z_s dM_s $$
where $Z$ is an element of $\mathcal{H}^2$. 
\end{theorem}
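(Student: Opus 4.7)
The Markov property of $M$ is the key structural assumption, and the plan is to turn it into a function-of-$M_t$ representation of $Y$ and then remove the orthogonal part of the Galtchouk-Kunita-Watanabe (GKW) decomposition by approximation. Using the strong Markov property together with the Markov-filtration hypothesis of Section~\ref{section:preliminaries}, one obtains a Borel function $u:[0,T]\times\real^d\to\real$ satisfying $Y_t=u(t,M_t)$ $\P$-a.s.\ for every $t$, with $u(t,m)=\E[F(M_T^{t,m})]$ and $u(T,\cdot)=F$. In parallel, the GKW decomposition of the square-integrable martingale $Y$ reads
\[
Y=Y_0+\int_0^\cdot Z_s\,dM_s+N,\qquad Z\in\mathcal{H}^2,\ N\in\mathcal{O}^2,\ \langle M,N\rangle=0,
\]
so the theorem reduces to proving $N\equiv 0$, equivalently $\E[[N]_T]=0$.

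\textbf{Smooth data.} I would first handle $F\in C^2$ with enough decay so that $u(t,\cdot)$ is $C^2$ in $m$ via dominated convergence on the formula $u(t,m)=\E[F(M_T^{t,m})]$. Because $Y=u(\cdot,M)$ is itself a martingale, an It\^o-type expansion of $u(t,M_t)$ forces every finite-variation contribution to cancel, leaving
\[
Y_t=u(0,M_0)+\int_0^t \nabla_m u(s,M_s)\,dM_s,
\]
so that $Z=\nabla_m u(\cdot,M)$ and $N\equiv 0$. The point, as emphasized in the paper's introduction, is that in the continuous case no regularity in time of $u$ is needed: the martingale property of $u(\cdot,M)$ absorbs it, and only second-order regularity in $m$ is used.

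\textbf{Extension to Borel $F$ and main obstacle.} For general Borel $F$ with $F(M_T)\in L^2$, choose $C^2$ approximants $F_n$ with $F_n(M_T)\to F(M_T)$ in $L^2$ (possible since $C_c^\infty(\real^d)$ is dense in $L^2$ of the law of $M_T$). The associated martingales $Y^n:=\E[F_n(M_T)\vert\mathcal{F}_\cdot]$ converge to $Y$ in $\mathcal{S}^2$ by Doob, and the GKW isometry forces the accompanying $Z^n\to Z$ in $\mathcal{H}^2$ and $N^n\to N$ in $\mathcal{O}^2$. Since each $N^n\equiv 0$ by the smooth case, the limit satisfies $N\equiv 0$, which is the theorem. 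The main obstacle I anticipate is the smooth case itself: justifying the It\^o-type expansion of $u(t,M_t)$ without imposing ellipticity on $M$ or demanding classical $C^{1,2}$ regularity of $u$. A natural route is to mollify $u$ only in the space variable, apply It\^o's formula to the mollified function $u_\varepsilon$ (which is $C^\infty$ in $m$, continuous in $t$), use the martingale property of $u(\cdot,M)$ to identify both the integrand $\nabla_m u(\cdot,M)$ and the vanishing of the orthogonal piece, and then pass to the limit $\varepsilon\to 0$ in $\mathcal{H}^2$ and $\mathcal{O}^2$.
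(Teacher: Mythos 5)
Your overall architecture --- Markov representation $Y_t=u(t,M_t)$ with $u(t,m)=\E[F(M_T^{t,m})]$, GKW decomposition, reduction to $N\equiv 0$, and an $L^2$ approximation to pass from regular to Borel $F$ --- is exactly the paper's, and your limiting step (the isometry $\E[[N^n-N]_T]\leq c\,\E[|F_n(M_T)-F(M_T)|^2]$ via BDG/Doob) is correct and matches Steps 1--2 of the paper's proof. The genuine gap is in the core step, the ``smooth case,'' and your own fallback does not close it. The function $u(t,m)=\E[F(m+M_T-M_t)]$ has, in general, \emph{no} differentiability in $t$: the map $t\mapsto M_T-M_t$ is merely continuous and $[M]$ need not be absolutely continuous in time, so even for $F\in C^\infty_c$ there is no reason for $\partial_t u$ to exist. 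Consequently It\^o's formula cannot be applied to $u(t,M_t)$, and mollifying $u$ in the space variable only does not help --- It\^o's formula for $u_\varepsilon(t,M_t)$ still requires a time derivative to make sense of the finite-variation term, and spatial mollification produces none. A second, smaller issue you pass over: since the filtration is only right-continuous, the orthogonal part $N$ is a priori c\`adl\`ag, so before any bracket or It\^o argument one must show $N$ is continuous (the paper deduces this from the predictability of $Y=u(\cdot,M)$, which forces the jump times of $N$ to be predictable and hence its jumps to vanish).

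The paper's Lemma \ref{lemma:rep1} is built precisely to sidestep the missing time regularity, and this is the idea your proposal lacks. Instead of expanding $u(t,M_t)$, one computes $[N,N]=[Y,N]$ directly as a limit of Riemann sums $\sum_j (u(t_{j+1},L_{t_{j+1}})-u(t_j,L_{t_j}))\Delta_j N$ and splits each increment into a spatial part and a temporal part. The spatial part needs only a \emph{first-order} Taylor expansion --- so $u\in C^{0,1}$ with continuous gradient suffices, not $C^{1,2}$ --- and converges to $\sum_i\int \partial_{x_i}u\,d[L^i,N]=0$ by orthogonality. The temporal part $\sum_j (u(t_{j+1},L_{t_j})-u(t_j,L_{t_j}))\Delta_j N$ is \emph{not} differentiated at all: one shows its limit $[N,N]$ is a martingale (using only boundedness of $u$ and uniform integrability of the sums), and a continuous finite-variation martingale starting at $0$ vanishes. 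If you replace your It\^o step by this bracket computation (and add the continuity-of-$N$ argument), the rest of your proposal goes through.
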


\noindent
We will say that $M$ admits a \textit{weak martingale representation property}. The proof of Theorem \ref{th:main} will be given at the end of this Section and requires several intermediary results including the following Lemma which is a key result in our approach and which is a refinement of the main result of the Note \cite{ReveillacOrtho}.

\begin{lemma}
\label{lemma:rep1}
Let $m\geq 1$ be an integer. Let $L:=(L_t)_{t\in [0,T]}$ be a square integrable continuous semimartingale with values in $\real^m$. Let $(Y_t)_{t\in [0,T]}$ be a one-dimensional c\`adl\`ag semimartingale with decomposition:
\begin{equation}
\label{eq:rep2}
Y_t=Y_0+\int_0^t Z_s dL_s + N_t + A_t, \quad t\in [0,T] 
\end{equation}
where $Z$ is a predictable process such that the stochastic integral $\int_0^\cdot Z_s dM_s$ makes sense and is a local martingale, $N$ is a one-dimensional square integrable martingale satisfying $[L^i,N]=0, \; i \in \{1,\ldots,m\}$ and $A$ is a continuous predictable process with finite variation. If there exists a bounded Borelian deterministic function $u:[0,T] \times \real^m \to \real$ with $x \mapsto u(t,x)$ differentiable for every element $t$ in $[0,T]$, with derivative $\partial_x u$ continuous in $(t,x)$, such that $Y=u(\cdot,L_\cdot)$. Then
$N \equiv 0$.
\end{lemma}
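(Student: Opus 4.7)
The strategy is to derive two expressions for the quadratic covariations $[Y, L^i]$ and $[Y, Y]$, one from \eqref{eq:rep2} and another from an It\^o-type identity for $Y = u(\cdot, L_\cdot)$, and then equate them to force $[N, N] \equiv 0$. From \eqref{eq:rep2}, using $[N, L^i] = 0$ and the continuity of $A$ (which makes $[A, L^i]$, $[A, N]$ and $[A, A]$ vanish), one reads off
\begin{equation*}
[Y, L^i]_t = \sum_j \int_0^t Z_s^j \, d[L^j, L^i]_s, \quad [Y, Y]_t = \sum_{j,k} \int_0^t Z_s^j Z_s^k \, d[L^j, L^k]_s + [N, N]_t.
\end{equation*}

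A preliminary observation is that $u$ is in fact jointly continuous on $[0,T]\times\real^m$. Writing $u(t, x) - u(t, 0) = \int_0^1 \partial_x u(t, \sigma x) \cdot x \, d\sigma$, the joint continuity of $\partial_x u$ makes $g(t, x) := u(t, x) - u(t, 0)$ jointly continuous, so $f(t) := u(t, 0) = Y_t - g(t, L_t)$ is a deterministic c\`adl\`ag function. Its jumps equal those of $Y$, which from \eqref{eq:rep2} (using the continuity of $L$, of $\int_0^\cdot Z_s\,dL_s$ and of $A$) coincide with $\Delta N$. Hence at any deterministic time $t_0$, $\Delta N_{t_0}$ is almost surely the deterministic constant $\Delta f(t_0)$; a c\`adl\`ag martingale's jump at a deterministic time has vanishing $\mathcal{F}_{t_0-}$-conditional mean, so $\Delta f(t_0) = 0$. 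Thus $f$ and $u$ are jointly continuous.

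Next, mollify $u$ in $(t, x)$ to produce smooth bounded $u_n$ with $u_n \to u$ and $\partial_x u_n \to \partial_x u$ locally uniformly on $[0, T] \times \real^m$. Applying It\^o's formula to $V^n_t := u_n(t, L_t)$ yields, for every continuous semimartingale $X$,
\begin{equation*}
[V^n, X]_t = \sum_j \int_0^t \partial_{x_j} u_n(s, L_s) \, d[L^j, X]_s.
\end{equation*}
The same formula applied to $u_n - u_m$ gives $[V^n - V^m, V^n - V^m]_T \to 0$ as $n, m \to \infty$, so the continuous martingale parts of $V^n$ are Cauchy in $\mathcal{H}^2$; combined with the uniform convergence $V^n \to Y$, this identifies the ucp limit of $[V^n, X]_t$ with $[Y, X]_t$. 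Specialising to $X = L^i$ and $X = Y$ furnishes the It\^o-type identities
\begin{equation*}
[Y, L^i]_t = \sum_j \int_0^t \partial_{x_j} u(s, L_s) \, d[L^j, L^i]_s, \quad [Y, Y]_t = \sum_{j,k} \int_0^t \partial_{x_j} u(s, L_s) \, \partial_{x_k} u(s, L_s) \, d[L^j, L^k]_s.
\end{equation*}

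Comparing the two forms for $[Y, L^i]$ shows that the vector-valued process $W_s := \partial_x u(s, L_s) - Z_s$ satisfies $\Sigma_s W_s = 0$ $d\P \otimes dC_s$-a.e., where $\Sigma_s$ denotes the density of $d\langle L, L\rangle_s$ with respect to $dC_s$. By symmetry of $\Sigma$ this also gives $W_s^\ast \Sigma_s W_s = 0$ and $Z_s^\ast \Sigma_s W_s = 0$, so that the algebraic identity $a^j a^k - b^j b^k = W^j a^k + b^j W^k$ (with $a = \partial_x u(s, L_s)$, $b = Z_s$) yields $\sum_{j,k}(\partial_{x_j} u \, \partial_{x_k} u - Z^j Z^k)(s, L_s) \Sigma_s^{jk} = W_s^\ast \Sigma_s \partial_x u(s, L_s) + Z_s^\ast \Sigma_s W_s = 0$; substituting into the identity for $[Y, Y]$ gives $[N, N] \equiv 0$. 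Hence $N$ is a c\`adl\`ag local martingale starting from zero with vanishing quadratic variation, and so $N \equiv 0$. The principal obstacle is the identification of $\lim_n [V^n, X]$ with $[Y, X]$: because $u$ has no time regularity beyond the continuity derived in the second step, this rests on combining the $\mathcal{H}^2$-Cauchy property of the martingale parts of $\{V^n\}$ with the uniform convergence $V^n \to Y$.
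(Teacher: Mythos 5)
Your reduction to the two bracket identities is sound, and the preliminary observations are fine: the joint continuity of $u$ and the continuity of $N$ (which the paper obtains by a different but equally valid route, namely that the jump times of the predictable process $Y=u(\cdot,L)$ are predictable, so $N$ cannot jump), and the algebra converting $\Sigma_s(\partial_x u - Z)=0$ into $[N,N]\equiv 0$. The genuine gap is the step you yourself flag as the principal obstacle: identifying $\lim_n [V^n,X]$ with $[Y,X]$. The principle you invoke --- that the $\mathcal{H}^2$-Cauchy (or ucp) convergence of the martingale parts of $V^n$ together with the uniform convergence $V^n\to Y$ forces this identification --- is false. Take $Y=B$ a Brownian motion and $V^n$ its time-mollification: $V^n\to B$ uniformly on compacts a.s., each $V^n$ is continuous of finite variation, so the martingale parts are $M^n=0$ and trivially Cauchy, yet $\lim_n [V^n,B]=0\ne [B,B]$. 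The point is that the ucp limit of the finite-variation parts $B^n$ need not be of finite variation, so nothing forces $M^\infty:=\lim_n M^n$ to be the local martingale part of $Y$; and your time-mollification produces exactly the dangerous term, a drift $\int_0^\cdot \partial_t u_n(s,L_s)\,ds$ whose total variation is uncontrolled because $u$ has no time regularity. Worse, for $X=N$ the identification is literally equivalent to the conclusion: from \eqref{eq:rep2} one has $[Y,N]=[N,N]$, while $[M^\infty,N]=\sum_j\int\partial_{x_j}u\,d[L^j,N]=0$, so the unproved step is precisely where the whole content of the lemma sits.

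The paper closes this gap by a different device. It writes $[Y,N]$ as the limit of Riemann sums $\sum_j (u(t_{j+1},L_{t_{j+1}})-u(t_j,L_{t_j}))\Delta_j N$ and splits each increment into a space part and a pure time part. The space part converges to $\sum_i\int\partial_{x_i}u\,d[L^i,N]=0$ by a first-order Taylor expansion, using only the spatial $C^1$ regularity and the orthogonality $[L^i,N]=0$. The time part $\sum_j (u(t_{j+1},L_{t_j})-u(t_j,L_{t_j}))\Delta_j N$, whose limit is therefore $[N,N]$, is then shown to define a martingale by a direct conditional-expectation computation: each summand has vanishing conditional expectation given $\mathcal{F}_{t_j}$ because $u(t_{j+1},L_{t_j})-u(t_j,L_{t_j})$ is $\mathcal{F}_{t_j}$-measurable and bounded while $N$ is a martingale, and uniform integrability follows from the boundedness of $u$. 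A continuous finite-variation martingale started at zero vanishes, whence $[N,N]\equiv 0$. This martingale argument for the time-increment term is the idea your proposal is missing; if you keep the mollification scheme you must still supply an argument of comparable strength to control the $\partial_t u_n$ term, at which point the mollification buys nothing.
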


\begin{proof} 
First note that since $u$ is a Borelian function and since $L$ is a predictable process (since continuous), the process $Y=u(\cdot,L)$ is a predictable process. Then by \cite[Proposition I.2.24]{JacodShiryaev} the jump times of $Y$ are predictable times. Since the later are exactly the jump times of $N$ it entails that $N$ is a continuous martingale (see \cite[Corollary I.2.31]{JacodShiryaev}). This remark is crucial in our proof where we mimic a technique used in \cite{IRR} and in the Note \cite{ReveillacOrtho}. The core idea is the following. Since $N$ is continuous, $N\equiv 0$ if and only if $[N,N]\equiv 0$. By \eqref{eq:rep2} it holds that $[N,N]=[Y,N]$, which can be computed using the Markovian representation of $Y$.\\
\noindent 
Let $\pi^{(n)}:=\{0=t_0^{(n)} \leq t_1^{(n)} \leq \cdots \leq t_N^{(n)}=T\}$ be a sequence of subdivisions of $[0,T]$ whose mesh $\vert \pi^{(n)} \vert$ tends to zero as $n$ goes to the infinity such that
\begin{equation}
\label{eq:bracket}
\lim_{n\to \infty} \sup_{t\leq s \leq T} \left\vert [ Y, N ]_s-\sum_{j=0}^{\varphi_s-1} (u(t_{j+1}^{(n)},L_{t_{j+1}^{(n)}})-u(t_j^{(n)},L_{t_j^{(n)}})) \Delta_j N \right\vert =0
\end{equation}
where the limit is understood in probability with respect to $\P$, $\Delta_j N:=N_{t_{j+1}^{(n)}}-N_{t_j^{(n)}}$ and $\varphi_s=j$ such that $t_j^{(n)} \leq s < t_{j+1}^{(n)}$. For simplicity we will drop the superscripts ${(n)}$ in the rest of the proof except when its absence could lead to a confusion. 
We will show that $[ Y, N ]\equiv 0$. We have that 
\begin{eqnarray}
\label{bracket1}
[Y,N]_s&=&\sum_{j=0}^{\varphi_s-1} (u(t_{j+1},L_{t_{j+1}})-u(t_j,L_{t_j})) \Delta_j N\nonumber\\
&=& \sum_{j=0}^{\varphi_s-1} \bigg( (u(t_{j+1},L_{t_j})-u(t_j,L_{t_j})) \Delta_j N\nonumber\\
&&+\sum_{j=0}^{\varphi_s-1} (u(t_{j+1},L_{t_{j+1}})-u(t_{j+1},L_{t_j})) \Delta_j N \bigg)\nonumber\\
&=:&A_{s,1}^{(n)}+A_{s,2}^{(n)}.
\end{eqnarray}
We consider the two summands above separately. We start with the term $A_2$ and we prove that
\begin{equation}
\label{eq:secondterm}
\lim_{n\to \infty} \sup_{0\leq s \leq T} |A_{s,2}^{(n)}| = 0, \textrm{ in } \P-\textrm{probability.}.
\end{equation}
For $i$ in $\{1,\ldots,d\}$, we denote by $M^i$ the $i$th component of $M$. 
We have  
\begin{eqnarray}
\label{bracket2}
A_{s,2}^{(n)}&=&\sum_{j=0}^{\varphi_s-1} (u(t_{j+1},L_{t_{j+1}})-u(t_{j+1},L_{t_j})) \Delta_j N\nonumber\\
&=&\sum_{j=0}^{\varphi_s-1} \sum_{i=1}^m \Delta_j N \big(u(t_{j+1},L_{t_j}^1,\ldots,L_{t_j}^{i-1},L_{t_{j+1}}^i,\ldots,L_{t_{j+1}}^m)\nonumber\\
&& \quad \quad \quad \quad -u(t_{j+1},L_{t_j}^1,\ldots,L_{t_j}^i,L_{t_{j+1}}^{i+1},\ldots,L_{t_{j+1}}^m)\big) \nonumber\\
&=&\sum_{j=0}^{\varphi_s-1} \sum_{i=1}^m \partial_{x_i} u(t_j,L_{t_j}) \Delta_j M^i \Delta_j N + R_{s,j,n}
\end{eqnarray}
where $R_{s,j,n}$ is defined as
$$R_{s,j,n}:= \sum_{i=1}^m \Delta_j L^i \Delta_j N \big(\partial_{x_i} u(t_{j+1},L_{t_j}^1,\ldots,L_{t_j}^{i-1},\bar{X}^i,L_{t_{j+1}}^{i+1},\ldots,L_{t_{j+1}}^m)-\partial_{x_i} u(t_j,L_{t_j})\big),$$
$\bar{X}^{i}$ is a random point between $L_{t_j}^i$ and $L_{t_{j+1}}^i$, and $\Delta_j L^i:=L_{t_{j+1}}^i-L_{t_j}^i$. 
Now consider the remainder term $\sum_{j=0}^n R_{s,j,n}$. We set: 
$$ \delta^{(n)}:=\sup_{a \in [s,t], \; |t-s|\leq |\pi^{(n)}|} \sum_{i=1}^m \left\vert \partial_{x_i} u(c,(L_b^1,\ldots,L_b^{i-1},L_a^i,L_c^{i+1},\ldots,L_{c}^{d}))-\partial_{x_i} u(b,L_b)\right\vert$$
We have that
$$\left\vert \sum_{j=0}^n R_{s,j,n} \right\vert \leq \delta^{(n)} \sum_{i=1}^m \sum_{j=0}^n |\Delta_j M^{i} \Delta_j N| \leq 2 \delta^{(n)} \sum_{i=1}^m \sum_{j=0}^n |\Delta_j L^i|^2 + |\Delta_j N|^2.$$
Now by continuity of $\partial_x u$ and of $X$, it is clear that $\lim_{n\to \infty} \delta^{(n)}=0$, $\P$-a.s. whereas $\sum_{i=1}^m \sum_{j=0}^n |\Delta_j L^i|^2 + |\Delta_j N|^2$ converges in $\P$-probability to $\sum_{i=1}^m [M^i,M^i]_T + [N,N]_T$ has $n$ goes to infinity, hence
$$ \lim_{n\to \infty} \left\vert \sum_{j=0}^n R_{s,j,n} \right\vert = 0, \textrm{ in } \P-\textrm{probability.} $$ 
Then it follows using \eqref{bracket2} that 
$$\lim_{n\to \infty} A_{s,2}^{(n)} = \lim_{n\to\infty} \sum_{j=0}^{\varphi_s-1} (u(t_j,L_{t_{j+1}})-u(t_j,L_{t_j})) \Delta_j N =\sum_{i=1}^m \int_0^s \partial_{x_i} u(r,L_r) d[ L^i, N ]_s=0$$
by strong orthogonality between $L$ and $N$ which proves \eqref{eq:secondterm}. Then \eqref{eq:bracket} and \eqref{bracket1} entail that $ \lim_{n\to \infty} \sup_{0\leq s \leq T} \vert A_{s,1}^{(n)}-[ Y, N ]_s \vert = 0, \textrm{ in } \P-\textrm{probability}$ 
and so
$$ \lim_{n\to \infty} \sup_{0\leq s \leq T} \vert A_{s,1}^{(n)}-[ N, N ]_s \vert = 0, \textrm{ in } \P-\textrm{probability}.$$
We will prove that $P:=[ N, N ]$ is a martingale, since it is by definition of finite variation and continuous this will show that $[ N, N ]\equiv 0$. We know that $\E[|[ N, N ]_s|]<\infty, \; \forall s \in [0,T].$
Now fix $0\leq s_1\leq s_2\leq T$. For an element $t_j$ of the subdivisions considered above we let $\delta_j u:=u(t_{j+1},L_{t_j})-u(t_j,L_{t_j})$. We have that
\begin{align}
\label{eq:martingale}
\E[P_{s_2} \vert \mathcal{F}_{s_1}]&=\E\left[\lim_{n \to \infty} \sum_{j=0}^{\varphi_{s_2}-1} \delta_j u \; \Delta_j N \vert \mathcal{F}_{s_1}\right]=\E\left[\lim_{n \to \infty} \sum_{j=0}^{\varphi_{s_2}-1} \delta_j u \; \Delta_j N + (N_{s_2}-N_{t_{\varphi_{s_2}}}) \vert \mathcal{F}_{s_1}\right]
\end{align}
where the last equality is a consequence of the continuity of the martingale $N$. In addition the sequence of random variables $\left( \sum_{j=0}^{\varphi_{s_2}-1} \delta_j u \; \Delta_j N + (N_{s_2}-N_{t_{\varphi_{s_2}}}) \right)_{n}$ is uniformly integrable. Indeed, since the function $u$ is bounded we have that
\begin{align*}
\E\left[\left| \sum_{j=0}^{\varphi_{s_2}-1} \delta_j u \; \Delta_j N + (N_{s_2}-N_{t_{\varphi_{s_2}}}) \right|^2\right]&=\sum_{j=0}^{\varphi_{s_2}-1} \E\left[|\delta_j u|^2 |\Delta_j N|^2 + |(N_{s_2}-N_{t_{\varphi_{s_2}}})|^2\right]\\
&\leq c \left( \sum_{j=0}^{\varphi_{s_2}-1} \E\left[|N_{t_{j+1}}|^2-|N_{t_{j}}|^2 + |N_{s_2}|^2-|N_{t_{\varphi_{s_2}}}|^2\right] \right)\\
&= c\E[|N_{s_2}|^2],
\end{align*} 
thus $\sup_n \E\left[\left| \sum_{j=0}^{\varphi_{s_2}-1} \delta_j u \; \Delta_j N + (N_{s_2}-N_{\varphi_{s_2}}) \right|^2\right]\leq c \E[|N_{s_2}|^2]<\infty$. Applying Lebesgue's dominated convergence Theorem in \eqref{eq:martingale} we get
\begin{align*}
\E[P_{s_2} \vert \mathcal{F}_{s_1}]&=\lim_{n \to \infty}  \E\left[\sum_{j=0}^{\varphi_{s_2}-1} \delta_j u \; \Delta_j N + (N_{s_2}-N_{t_{\varphi_{s_2}}}) \vert \mathcal{F}_{s_1}\right]\\
&=\lim_{n \to \infty} \Bigg( \sum_{j=0}^{\varphi_{s_1}-1} \delta_j u \; \Delta_j N + \E[(\delta_{\varphi_{s_1}} u) \; \Delta_{\varphi_{s_1}} N\vert\mathcal{F}_{s_1}]\\ 
&\quad \quad \quad +\E\Bigg[\sum_{j=\varphi_{s_1}+1}^{\varphi_{s_2}-1} \delta_j u \; \Delta_j N + (N_{s_2}-N_{t_{\varphi_{s_2}}})\vert \mathcal{F}_{s_1}\Bigg]\Bigg)\\ 
&=P_{s_1} + \lim_{n \to \infty} \left((\delta_{\varphi_{s_1}} u) \; (N_{s_1}-N_{t_{\varphi_{s_1}}})\right)=P_{s_1}
\end{align*} 
where for the last equality we have used the fact that $u$ is bounded and the continuity property of the martingale $N$. Putting all the previous facts together, $P$ is a continuous martingale which by definition has finite variations so 
$$[ N, N ]_s=[ N, N ]_0=0, \quad \textrm{ for all } s \in [0,T], \; \P-a.s.$$
which entails that $N \equiv 0$.
\end{proof}
The proof of Theorem \ref{th:main} will be given thanks to an approximating procedure.  
More precisely, let $F:\real \to \real$ be a bounded Borelian function. For $\varepsilon$ in $(0,1)$ we set:
$$ \phi_\varepsilon(x):=\frac{\exp\left(-\sum_{i=1}^d \frac{x_i^2}{2 \varepsilon}\right)}{(2 \pi \varepsilon)^{d/2}}, \quad \forall x\in \real^d,$$
and
\begin{equation}
\label{eq:Fe}
F_\varepsilon(x):=(F \ast \phi_\varepsilon)(x) := \int_{\real} F(y) \phi_\varepsilon(x -y) dy, \quad \forall x\in \real^d.
\end{equation}
Hence $F_\varepsilon$ is a Borelian bounded function which is infinitely differentiable. 
This regularity will ensure the representation of $F_\e(M_T)$ without orthogonal component.
\begin{lemma}
\label{lemma:Markovprop}
For any $\varepsilon$ in $(0,1)$ let
$Y^\e:=\E[F_\e(M_T) \vert \mathcal{F}_\cdot]$. There exists a bounded deterministic function $u_\e:[0,T] \times \real^d \to \real$ in $C^{0,1}([0,T] \times \real^d)$ such that
$$ Y^\e_t=u_\e(t,M_t), \quad t\in [0,T], \;\P-a.s..$$
\end{lemma}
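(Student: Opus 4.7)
The plan is to identify $Y^\varepsilon$ as a deterministic function of $(t, M_t)$ via the Markov structure, and then to inherit the required regularity from the smoothness of the mollified $F_\varepsilon$ by differentiating under the expectation. Since $M$ is a strong Markov process and $\mathcal{F}$ is a Markov filtration in the sense of \cite{CinlarJacodProtterSharpe}, $Y^\varepsilon_t = \E[F_\varepsilon(M_T)\vert \mathcal{F}_t]$ is a Borelian function of $M_t$, and using the notation $M^{t,m}_s = m + M_s - M_t$ introduced in Section \ref{section:preliminaries} the natural candidate is
$$u_\varepsilon(t,m) := \E\left[F_\varepsilon(M^{t,m}_T)\right] = \E\left[F_\varepsilon(m + M_T - M_t)\right],$$
so that $Y^\varepsilon_t = u_\varepsilon(t, M_t)$ almost surely. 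Boundedness of $u_\varepsilon$ is then immediate: since $\phi_\varepsilon$ is a probability density and $F$ is bounded, Young's convolution inequality yields $\|F_\varepsilon\|_\infty \leq \|F\|_\infty$, hence $|u_\varepsilon(t,m)| \leq \|F\|_\infty$ for every $(t,m)$.

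For the $C^{0,1}$ regularity of $u_\varepsilon$, the key ingredient is that $\phi_\varepsilon$ is a Schwartz function, so every partial derivative $\partial_{x_i} \phi_\varepsilon$ lies in $L^1(\real^d)$. Consequently $F_\varepsilon = F * \phi_\varepsilon$ is $C^\infty$ with all of its derivatives uniformly bounded by Young's inequality:
$$\|\partial_{x_i} F_\varepsilon\|_\infty \leq \|F\|_\infty \|\partial_{x_i} \phi_\varepsilon\|_{L^1} < \infty.$$
With these uniform bounds in hand the dominated convergence theorem takes over in two stages. First, the a.s.\ continuity of $t \mapsto M_t$ together with continuity of $F_\varepsilon$ makes $(t,m) \mapsto F_\varepsilon(m + M_T - M_t)$ a.s.\ continuous, and $\|F\|_\infty$ provides an integrable envelope, so $u_\varepsilon$ is continuous on $[0,T] \times \real^d$. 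Second, the mean value theorem applied to $F_\varepsilon$ together with the uniform bound on $\nabla F_\varepsilon$ justifies exchanging difference quotient and expectation, giving
$$\partial_{x_i} u_\varepsilon(t,m) = \E\left[\partial_{x_i} F_\varepsilon(m + M_T - M_t)\right],$$
which is in turn continuous in $(t,m)$ by the same dominated convergence argument applied to $\partial_{x_i} F_\varepsilon$.

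The only genuinely delicate point is the reduction in the first paragraph: that the conditional expectation $Y^\varepsilon_t$ coincides almost surely with the specific deterministic function $u_\varepsilon(t, M_t)$ built from the shifted-process formula. This is precisely where the Markov-filtration hypothesis of \cite{CinlarJacodProtterSharpe} enters, giving both the measurability of $Y^\varepsilon_t$ with respect to $\sigma(M_t)$ and the correct identification of the associated deterministic function. Once this identification is in place, the remainder is a routine smoothing-under-expectation argument made effective by the rapid decay and infinite smoothness of the Gaussian mollifier $\phi_\varepsilon$; notably, no regularity whatsoever on $F$ is used beyond measurability and boundedness.
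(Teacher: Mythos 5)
Your proposal is correct and follows essentially the same route as the paper: identify $u_\varepsilon(t,m)=\E[F_\varepsilon(m+M_T-M_t)]$ via the Markov property, then obtain continuity and spatial differentiability of $u_\varepsilon$ by differentiating under the expectation using the boundedness and smoothness of the mollified $F_\varepsilon$ together with dominated convergence. The only cosmetic difference is that the paper establishes continuity of $\nabla_x u_\varepsilon$ by first proving $\nabla_x F_\varepsilon$ is Lipschitz (via a bound on the second derivatives of $F_\varepsilon$), whereas you invoke dominated convergence directly with the uniform bound on $\nabla F_\varepsilon$; both are valid.
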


\begin{proof}
Using the Markov property of $M$ we directly have that $u_\e(t,x):=\E[F_\e(M_T^{t,x})]$ where we recall that $M_s^{t,x}:=x+M_s-M_t$ for every $s\in [t,T]$. 
To show that $u_\e$ is Borelian we will prove that it is continuous. Indeed, let $(s,t,x,y)$ in $[0,T]^2\times (\real^d)^2$. The fact that $F_\varepsilon$ is bounded and Lebesgue's dominated convergence Theorem yield
\begin{align*}
\lim_{(s,x)\to (t,y)} |u_\e(s,x)-u_\e(t,y)|&\leq \lim_{(s,x)\to (t,y)} \E[|F_\e(M_T^{s,x})-F_\e(M_T^{t,y})|]\\
&= \E[\lim_{(s,x)\to (t,y)} |F_\e(M_T^{s,x})-F_\e(M_T^{t,y})|]\\
&= \E[\lim_{(s,x)\to (t,y)} |F_\e(x+M_T-M_s)-F_\e(y+M_T-M_t)|]=0 
\end{align*}
since $F_\e$ and $M$ are continuous.
We now deal with the differentiability in space. To this end we prove that $F_\e$ is Lipschitz. 
Let $x$ in $\real^d$ and $i\in\{1,\ldots,d\}$. We have
\begin{align*}
|\partial_{x_i} F_\e(x)|&=|(F \ast \partial_{x_i} \varphi_\e)(x)| \\
&=\left| \int_{\real^d} F(y) \partial_{x_i} \Phi_\e(x-y) dy \right|\\
&=\frac{1}{\varepsilon} \left| \int_{\real^d} F(y) (x_i-y_i) \Phi_\e(x-y) dy \right|\\
&\leq \frac{c}{\varepsilon} \int_{\real^d} |x_i-y_i| \Phi_\e(x-y) dy \\
&= \frac{c}{\varepsilon} \int_{\real} |x_i-y_i| \exp\left(-\frac{(x_i-y_i)^2}{2\epsilon}\right)(2 \pi \e)^{-1/2} dy_i \\
&\leq \frac{c}{\varepsilon} \left| \int_{\real} |x_i-y_i|^2 \exp\left(-\frac{(x_i-y_i)^2}{2\epsilon}\right) dy_i \right|^{1/2}= \frac{c}{\varepsilon^{1/2}}
\end{align*}
which proves that $F_\e$ is Lipschitz continuous. For every element $x$ in $\real^d$, $1\leq i\leq d$, and $e_i:=(0,\ldots,0,1,0,\ldots,0)$ where the $1$ is at the $i$th component, we write $M^{i,t,x}$ for the $i$th component of $M^{t,x}$, $\partial_{x_i} F_\e$ for the partial derivative of $F_\e$ with respect to $x_i$ and $\nabla_x F_\e$ for the vector $(\partial_{x_1}F_\e,\ldots,\partial_{x_d}F_\e)$. We have 
\begin{align*}
&\lim_{\alpha \to 0} \frac{u(t,x+\alpha e_i)-u(t,x)}{\alpha}\\
&= \lim_{\alpha \to 0} \frac{\E[F_\e(M_T^{t,x+\alpha e_i})-F_\e(M_T^{t,x})]}{\alpha}\\
&= \lim_{\alpha \to 0} \frac{1}{\alpha} \E[\partial_{x_i} F_\e(M_T^{1,t,x},\ldots,M_T^{i-1,t,x},\bar{M},M_T^{i+1,t,x},M_T^{d,t,x})\underbrace{(M_T^{i,t,x+\alpha e_i}-M_T^{i,t,x})}_{=M_T^{i,t,x+\alpha e_i}-M_T^{i,t,x}=\alpha}]\\
&= \lim_{\alpha \to 0} \E[\partial_{x_i} F_\e(M_T^{1,t,x},\ldots,M_T^{i-1,t,x},\bar{M},M_T^{i+1,t,x},M_T^{d,t,x})]
\end{align*}
where $\bar{M}:=z+M_T^i-M_t^i$ with $z$ between $x+\alpha$ and $x$, and $M_T^{i,t,x}=x+M^i_T-M^i_t$ ($M^i$ being the $i$th component of $M$).  
Hence by continuity of $\partial_{x_i} F_\e$ and since $\partial_{x_i} F_\e$ is bounded, Lebesgue's dominated convergence Theorem implies that
$$ \lim_{\alpha \to 0} \frac{u(t,x+\alpha e_i)-u(t,x)}{\alpha} = \E[\partial_{x_i} F_\e(M_T^{t,x})].$$
Hence $\nabla_x u(t,x)=\E[\nabla_x F_\e(M_T^{t,x})]$. We finally prove that $\nabla_x u_\e$ is continuous. For this we prove that $\nabla_x F_\e$ is Lipschitz continuous. 
Let $1\leq i,j\leq d$. For $z$ in $\real^d$, it holds that  
\begin{align*}
\partial_{x_i x_j} F_\e(z)=(F \ast \partial_{x_i x_j} \Phi_\e)(z)&= \frac{1}{\e^2} \int_{\real^d} F(y) \Phi_\e(x-y) ((x_i-y_i) (x_j-y_j) - \varepsilon \textbf{1}_{i=j}) dy\leq \frac{c}{\e}.
\end{align*}
Fix $(t,x)$ in $[0,T]\times \real^d$. Let $s$ in $[0,T]$ and $y$ in $\real^d$ such that $|(s,y)-(t,x)| \leq 1$. We have
\begin{align*}
\lim_{(s,y)\to(t,x)} |\nabla_x u(t,x)-\nabla_x u(s,y)| &\leq \lim_{(s,y)\to(t,x)} \E[|\nabla_x F_\e(M_T^{t,x})-\nabla_x F_\e(M_T^{s,y})|]\\
&\leq c \lim_{(s,y)\to(t,x)} \E[|M_T^{t,x}-M_T^{s,y}|]\\
&= c \lim_{(s,y)\to(t,x)} \E[|x-y-(M_t-M_s)|]=0
\end{align*}
where for the last equality we have used Lebesgue's dominated convergence Theorem, since, $|M_t-M_s| \leq 2 \sup_{r\in [0,T]} |M_r|$ which is square integrable by Doob's inequality.
\end{proof}

We are now in position to prove Theorem \ref{th:main}.\\\\
\noindent
{\em{Proof of Theorem \ref{th:main}:}} The proof is done in two steps.\\\\
\noindent
\textbf{Step 1:} We first assume that $F$ is bounded. We define as in Lemma \ref{lemma:Markovprop}, for every $\varepsilon$ in $(0,1)$, the square integrable martingale $Y^\e:=\E[F_\e(M_T)\vert \mathcal{F}_\cdot]$ where $F_\e$ is defined by \eqref{eq:Fe}. The martingale $Y^\e$ admits the representation:
$Y^\e= Y_0^\e + \int_0^\cdot Z_s^\e dM_s + N^\e$
with $N^\e$ a square integrable c\`adl\`ag martingale such that $[M,N^\e]=\langle M, N^\e\rangle \equiv 0$. In addition, from Lemma \ref{lemma:Markovprop}, there exists a deterministic function $u_\e \in C^{0,1}([0,T]\times \real^d)$ such that $Y^\e=u_\e(\cdot,M)$. Hence $Y^\e$ fulfills the assumptions of Lemma \ref{lemma:rep1} and so $N^\e \equiv 0$ which means that $ Y^\e = Y_0^\e + \int_0^\cdot Z_s^\e dM_s$.
Let $\delta_\varepsilon$ be the square integrable martingale $\delta_\varepsilon:=Y^\varepsilon-Y$. Then Burkholder-Davis-Gundy's inequality and Doob's inequality imply that there exists a constant $\alpha>0$ independent of $\varepsilon$ such that
$$ \E\left[[N^\varepsilon-N]_T^2 \right] \leq \E\left[\int_0^T |(Z_s^\e-Z_s)q_s^\ast|^2 dC_s + [N^\varepsilon-N]_T^2 \right] \leq \alpha \E[|(F_\varepsilon-F)(M_T)|^2].$$
Now since $F_\varepsilon$ and $F$ are supposed to be bounded, we get using Lebesgue's dominated convergence Theorem that
$$ \lim_{\varepsilon \to 0} \E\left[[N^\varepsilon-N]_T^2 \right] \leq \alpha \E[|\lim_{\varepsilon \to 0} (F_\varepsilon-F)(M_T)|^2]=0$$
since for every $x$ in $\real^d$, it holds that $\lim_{\varepsilon \to 0} F_\varepsilon(x)=F(x)$.
Thus $N \equiv 0$ and 
$$ Y= Y_0 + \int_0^\cdot Z_s dM_s$$
(note that we get at the same time that $Z=\lim_{\varepsilon \to 0} Z^\e$ in $\mathcal{H}^2$). So the representation holds true without orthogonal part.\\\\
\noindent
\textbf{Step 2:} Now we consider a Borelian function $F$. Let $F^n(x):=\max(-n,\min(F(x),n))$ for every integer $n\geq 1$. Then the sequence $(|F^n-F|)_n$ is decreasing. Since every function $F^n$ is bounded, the result of Step 1 implies that each square integrable martingale $Y^n:=\E[F^n(M_T) \vert \mathcal{F}_\cdot]$ can be represented as $ Y^n=Y_0^n + \int_0^\cdot Z_s^n q_s^\ast dM_s $
with $Z^n$ in $\mathcal{H}^2$. Using once again Burkholder-Davis-Gundy and Doob inequalities applied to the martingale $Y^n-Y$, we get that      
$$ \E\left[[N]_T^2 \right] \leq \alpha \E[|(F^n-F)(M_T)|^2]$$
and $\lim_{n\to \infty} \E[|F^n-F(M_T)|^2]=0$ by monotone convergence Theorem which concludes the proof.
\hfill$\square$\\\par\medbreak

\begin{remark}
We have assumed that $M$ is a square-integrable martingale. Obviously, the results of this section hold true if one considers only a local martingale. Then all the proofs are unchanged up to a localization argument and naturally, the process $Z$ obtained in the decomposition only belongs to $\mathcal{H}^2_{loc}$.  
\end{remark}

\section{Existence of solutions of quadratic growth BSDEs under general filtration}
\label{section:existence}

We have proved that every random variable of the form $F(M_T)$ where $F$ is a Borelian map and $M$ a continuous Markov processes can be represented (up to a constant) as a stochastic integral of a predictable process against $M$, and we refer to such a property as a \textit{weak martingale representation property} for $M$. The usual martingale representation property is the basic ingredients to solve \textit{Backward Stochastic Differential Equations} which can be viewed as a non-linear version of martingale representation. We will prove in Section \ref{section:BSDEwithout} that the representation (without orthogonal component) we have obtained in the previous section can be extended to solving BSDEs driven by a continuous martingale without an orthogonal part in the continuous Markovian realm. But before that we need to fill some gap about the existence of BSDEs (with quadratic growth) driven by a continuous martingale with respect to a general filtration, that is a filtration which allows for discontinuous martingales. This program is realized in this Section.\\\\    
\noindent
More precisely, in this section, we will prove existence of a solution for quadratic growth BSDEs driven by a continuous martingale $M$ with respect to a general filtration $(\mathcal{F}_t)_{t\in[0,T]}$, and $M$ is not assumed (in this Section only) to be a Markov process anymore. We just assume that $M$ is a $d$-dimensional martingale with respect to a right-continuous complete filtration $(\mathcal{F}_t)_{t\in [0,T]}$. We use the notations and assumptions of Section \ref{section:preliminaries} (once again except the Markovian one). Note that the first results about existence and uniqueness of solutions for quadratic growth BSDEs with respect to a continuous martingale are given by Morlais in \cite{Morlais}, and rely on the fact that the filtration is continuous, whereas the Lipschitz case has been treated by El Karoui and Huang in \cite{ElKarouiHuang}. We will prove existence of a triple $(Y,Z,N)$ in $\mathcal{S}^2\times \mathcal{H}^2\times \mathcal{O}^2$ solution to:
\begin{equation}
\label{eq:BSDEexistence}
Y_t=\zeta -\int_t^T f(s,Y_s,Z_s) dC_s - \int_t^T Z_s dM_s - \int_t^T dN_s, \quad t\in [0,T].
\end{equation}
Here $\zeta$ is an $\mathcal{F}_T$-measurable bounded random variable and $f$ satisfies a quadratic growth condition of the form $|f(z,y,z)| \leq c(|y|+ \frac{\gamma}{2}|z|^2)$. Note that the terminal condition $\zeta$ and the driver $f$ are the data of the equation. The important feature here is that \textit{a priori} the martingale $N$ is only c\`adl\`ag and not continuous. In \cite{Morlais}, the proof of existence of a solution relies on the assumption that $N$ is continuous since the filtration is. The key argument consists in showing that if \eqref{eq:BSDEexistence} admits a solution, then the process $Y$ is bounded. This is proved in \cite[Lemma 3.1 (i)]{Morlais} (see also \cite[Theorem 2]{BriandHu1} when $M$ is a Brownian motion), by showing that a well-chosen process depending on $Y$ is a submartingale whose terminal value depends only on $\zeta$ and on some constants related to the growth condition of the driver $f$, which allows one to conclude that $Y$ is bounded (we refer to \cite[Lemma 3.1 (i)]{Morlais} for more details). However, the computations (which involve It\^o's-Tanaka's formula) leading to this submartingale break if the orthogonal component is not continuous any more, since in that case, extra terms introduced by the use of It\^o's-Tanaka's formula make the analysis intractable. One possible way to avoid this technicality is to assume that the driver $f$ does not depend on $Y$ (see \textit{e.g.} \cite[Lemma 3 (i)]{Morlaisjump1}). Once the boundedness of $Y$ is established one can combine a so-called Cole-Hopf transform with a monotone approximation procedure to deduce the existence of a solution. Very recently, Barrieu and El Karoui \cite{BarrieuElKaroui} have described a new method for providing solutions to quadratic growth BSDEs under continuous filtration, by simultaneously approximating the driver by a sequence of Lipschitz growth ones, and by controlling the norm of $Y$ uniformly. They then conclude using a pretty general monotone stability argument for so-called "quadratic semimartingales". The existence result for qgBSDEs is then in their paper a by-product of this monotone stability property for quadratic semimartingales.\\\\
\noindent
We will follow the main stream of the proof of \cite{BarrieuElKaroui} concerning the uniform control of the norm of the $Y$ process and the monotone approximation, and replace the stability argument by a result on compactness for general semimartingales due to Barlow and Protter in \cite{BarlowProtter}, and refine the estimates used in \cite{Morlais}. Note that in a sense, the compactness result established by Barrieu and El Karoui can be viewed as a deeper analysis of the result of Barlow and Protter in the continuous filtration framework. \\\\
\noindent
The main result of this section is the theorem below. 

\begin{theorem}
\label{th:existence}
Let $f:[0,T]\times \real\times \real^d \to \real$ be a continuous function in $(y,z)$ such that the following growth condition is satisfied:
$$|f(t,y,z)| \leq \eta_t+ b |y| + \frac{\gamma}{2} |z|^2, \quad \forall (s,y,z) \in [0,T]\times \real\times\real^d,$$
with $b$ a positive constant and, $\eta=(\eta_t)_{t\in [0,T]}$ a non-negative predictable process bounded by a positive constant $a$. Let $\zeta$ an $\mathcal{F}_T$-measurable bounded random variable. Then there exists a triple of processes $(Y,Z,N) \in \mathcal{S}^2\times \mathcal{H}^2\times \mathcal{O}^2$ solution to the BSDE:
$$ Y_t=\zeta + \int_t^T f(s,Y_s,Z_s q_s^\ast) dC_s - \int_t^T Z_s dM_s - \int_t^T dN_s, \quad t\in [0,T].$$
In addition $Y$ is bounded by a positive constant which only depends on $a$, $b$ and $\|\zeta\|_\infty$. Finally, the process $\int_0^\cdot Z_s dM_s + N$ is a BMO martingale.
\end{theorem}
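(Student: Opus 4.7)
The plan is to follow Barrieu and El Karoui's monotone approximation strategy \cite{BarrieuElKaroui} but to replace their monotone stability argument, which relies on the continuity of the filtration, with the compactness theorem for general semimartingales of Barlow and Protter \cite{BarlowProtter}. Concretely, I would construct a sequence of Lipschitz-in-$(y,z)$ drivers $f^n$ via inf-convolution and truncation so that the uniform quadratic growth bound $|f^n(t,y,z)|\le \eta_t+b|y|+\tfrac{\gamma}{2}|z|^2$ is preserved and $f^n \to f$ locally uniformly. By El Karoui and Huang \cite{ElKarouiHuang}, the Lipschitz BSDE with terminal condition $\zeta$ and driver $f^n$ admits a unique solution $(Y^n,Z^n,N^n)\in \mathcal{S}^2\times\mathcal{H}^2\times \mathcal{O}^2$, and the task is then to pass to the limit $n\to \infty$.

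The first a priori estimate to establish is the uniform bound $\sup_n\|Y^n\|_\infty \le K$ with $K$ depending only on $a$, $b$, $\gamma$ and $\|\zeta\|_\infty$. I would apply It\^o's formula to a convex exponential functional such as $\Phi(Y^n_t):=\gamma^{-2}(\exp(\gamma|Y^n_t|)-1-\gamma|Y^n_t|)$, and show that an appropriate time-shifted version is a submartingale. The obstruction identified in \cite{Morlais} is that the It\^o--Tanaka formula produces intractable terms when $N^n$ is discontinuous; the key observation that bypasses this without requiring a continuous filtration is that the sum of jump contributions in It\^o's formula for $\Phi(Y^n)$ equals $\sum_s [\Phi(Y^n_{s-}+\Delta N^n_s)-\Phi(Y^n_{s-})-\Phi'(Y^n_{s-})\Delta N^n_s]\ge 0$ by convexity of $\Phi$, so these terms point in the favourable direction and can simply be discarded when exhibiting the submartingale.

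Once $Y^n$ is uniformly bounded, the classical trick of applying It\^o's formula to $\exp(\mu Y^n)$ with $\mu$ large enough absorbs the quadratic growth in $Z^n$ through the second-order term, and yields, by taking conditional expectations between arbitrary $\mathcal{F}$-stopping times,
\[\mathrm{esssup}_\tau \E\left[\int_\tau^T |Z^n_sq_s^\ast|^2 dC_s + [N^n]_T-[N^n]_\tau \,\Big|\, \mathcal{F}_\tau\right]\le C,\]
together with the jump bound $|\Delta N^n_t|=|\Delta Y^n_t|\le 2K$. Consequently $M^n:=\int_0^\cdot Z^n_s dM_s + N^n$ is uniformly BMO. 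Since each $Y^n$ is a semimartingale whose martingale part is uniformly BMO and whose finite-variation part has total variation uniformly controlled through the growth bound on $f^n$, the compactness theorem of Barlow and Protter applies and furnishes a subsequence $(Y^{n_k})$ converging uniformly on $[0,T]$ in probability to a c\`adl\`ag semimartingale $Y$, with convergence of the canonical decomposition. Decomposing the limit via Galtchouk--Kunita--Watanabe produces the candidate triple $(Y,Z,N)$ with $Z\in\mathcal{H}^2$ and $N\in \mathcal{O}^2$, and the BMO estimate survives in the limit, yielding the final assertion of the theorem.

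I expect the main obstacle to be the identification of the limit as a solution of the original BSDE, that is, passing to the limit in the quadratic nonlinearity $f^n(s,Y^n_s,Z^n_sq_s^\ast)$. Convergence in UCP of $Y^n$ and weak convergence of $Z^n q^\ast$ in $L^2(dC)$ are insufficient because of the $|z|^2$ term. The strategy is to exploit the uniform BMO bound on $M^n$: by the reverse H\"older inequality it yields $L^p$-integrability of $\int_0^T|Z^n_s q_s^\ast|^2 dC_s$ for some $p>1$ uniformly in $n$, which, combined with the Barlow--Protter convergence of the finite-variation parts and a refinement of the pointwise estimates in \cite{Morlais}, upgrades the weak convergence of $Z^n q^\ast$ to strong convergence in $L^2(dC)$ along a subsequence, and similarly for $N^n$. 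Continuity of $f$ in $(y,z)$ and the uniform quadratic growth bound then permit a dominated-convergence argument identifying the drift of $Y$ as $-\int_0^\cdot f(s,Y_s,Z_s q_s^\ast)dC_s$, completing the proof.
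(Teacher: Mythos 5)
Your overall strategy (Lipschitz approximation in the spirit of Barrieu--El Karoui, uniform bound on $Y^n$, uniform BMO bound on the martingale parts, then Barlow--Protter to pass to the limit) is indeed the one the paper follows, but there are two genuine gaps in the way you propose to execute it. First, you invoke Barlow--Protter as a \emph{compactness} theorem that ``furnishes a subsequence $(Y^{n_k})$ converging uniformly in probability''. It does no such thing: \cite[Theorem 1]{BarlowProtter} is a closure/stability result that takes the convergence $\E[\sup_t|Y^n_t-Y_t|]\to 0$ as a \emph{hypothesis} and returns convergence of the canonical decompositions. You therefore need an independent argument for why the $Y^n$ converge at all. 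The paper secures this by making the approximation \emph{monotone}: in Step 2 the inf-convolutions $f_n$ increase, so by comparison the $Y^n$ increase and, being uniformly bounded, converge pointwise and hence in $\mathcal{S}^2$; in Step 3 a second, decreasing, approximation is used. Your scheme (``inf-convolution and truncation'') does not establish monotonicity, and note also that the naive inf-convolution of a driver bounded below by $-\tfrac{\gamma}{2}|z|^2$ is identically $-\infty$, which is precisely why the paper needs the two-stage decomposition $f=f^+-f^-$ with only $f^-$ inf-convolved.

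Second, for the key uniform bound $\|Y^n\|_\infty\leq \tilde c$ you propose the It\^o(--Tanaka) submartingale argument applied to an exponential of $|Y^n|$, discarding the jump terms by convexity. This is exactly the step the paper identifies as breaking down when the orthogonal component is discontinuous: to absorb the terms $b|y|$ and $\eta_t$ one must apply the convex function to a time-weighted quantity of the form $e^{bt}|Y^n_t|+\int_0^t e^{bs}\eta_s\,dC_s$, which is not $C^1$ in $y$ at $0$, so Meyer--Tanaka local-time terms and their interaction with the jumps of $N^n$ enter the drift and the sign bookkeeping is no longer the simple convexity observation you describe. The paper avoids this entirely: it bounds $Y^n$ by comparison with the solutions $U^p$ of the BSDEs driven by the dominating Lipschitz drivers $q_p$, and bounds $U^p$ through the explicit dual representation $U^p_t=\mathrm{esssup}_{\beta,\nu}\,\E_{\Q^\nu}[\,\zeta e^{-\int_t^T\beta_r dC_r}-\cdots\,|\,\mathcal F_t]$ obtained from Girsanov changes of measure, from which the bound $e^{b}(\|\zeta\|_\infty+a)$ is read off by inspection. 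Your sketch would need either to carry out the Tanaka computation with jumps in full (which the authors deem intractable) or to adopt this duality route. By contrast, your treatment of the BMO estimates (It\^o on $\tilde\psi(Y^n)$ with nonnegative jump corrections, then It\^o on $|Y^n|^2$ for $[N^n]$) and your concern about identifying the quadratic term in the limit are essentially the paper's, and the strong $L^2(dC)$ convergence of $Z^nq^\ast$ you want comes for free from the $\mathcal H^1$-convergence of the martingale parts in Barlow--Protter's conclusion once the prior convergence of $Y^n$ is in place.
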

 
\begin{proof}
The proof is done in several steps.\\\\
\noindent
\textbf{Step 1:}
Let $p\geq 1$ be an integer and set 
$$ q_p(s,y,z):=\frac{\gamma}{2} |z^2|\textbf{1}_{|z|\leq p} + (\gamma p |z|-\frac{\gamma}{2} p^2) \textbf{1}_{|z|>p} + b |y|+ \eta_s, \quad (s,y,z)\in [0,T]\times \real\times \real^d. $$
Since $q_p$ is uniformly Lipschitz in $(y,z)$, there exists (see \cite[Theorem 6.1]{ElKarouiHuang}) a (unique) triple $(U^p,V^p,W^p) \in \mathcal{S}^2\times \mathcal{H}^2\times \mathcal{O}^2$ solution to 
$$ U_t^p=\zeta + \int_t^T q_p(s,U_s^p,V_s^pq_s^\ast) dC_s - \int_t^T V_s^p dM_s - \int_t^T dW_s^p, \quad t\in [0,T].$$
The objective of Step 1 is to prove that:
\begin{itemize}
\item[(i)]$ U_t^p=\underset{\beta \in [-b,b], \; |\nu|\leq p}{\textrm{essusp}} \E_{\Q^\nu}\left[ \zeta e^{-\int_t^T \beta_r dC_r} - \int_t^T e^{-\int_t^s \beta_r dC_r} \eta_s dC_s - \frac{\gamma}{2} \int_0^t e^{-\int_t^s \beta_r dC_r} \nu_s q_s \nu_s^\ast dC_s \Big\vert \mathcal{F}_t\right]$
with $\frac{d\Q^\nu}{d\P}:=\mathcal{E}\left(\int_0^\cdot \gamma \nu_s dM_s \right)$, and $\beta$, $\nu$ two predictable continuous processes.
\item[(ii)] The sequence $(U^p)_p$ is increasing.
\item[(iii)] There exists a constant $\tilde{c}>0$ which only depends on $a$ (\textit{i.e.} the bound on $\eta$), $b$, on $\|\zeta\|_\infty$ such that $ |U^p| \leq \tilde{c}, \; \P-a.s., \; \forall p\geq 1.$ In particular $\tilde{c}$ is independent of $p$.  
\end{itemize}
The claim (i) is shown using duality arguments. Indeed, we have: 
$$q_p(s,y,z)=\sup_{\beta \in [-b,b], \; |\nu|\leq p} \{l_{\beta,\nu}(s,y,z)\}, \quad \forall (s,y,z) \in [0,T]\times \real\times \real^d$$
with $l_{\beta,\nu}(s,y,z):=-\beta_s y +\gamma z q_s \nu_s^\ast -\frac{\gamma}{2} \nu_s q_s \nu_s^\ast + \eta_s$.   
For processes $\beta, \nu$ as above, let $(Y^{\beta,\nu},Z^{\beta,\nu},N^{\beta,\nu}) \in \mathcal{S}^2\times \mathcal{H}^2\times \mathcal{O}^2$ be the solution to the Lipschitz BSDE:
$$ Y_t^{\beta,\nu}=\zeta + \int_t^T l_{\beta,\nu}(s,Y_s^{\beta,\nu},Z_s^{\beta,\nu}q_s^\ast) dC_s - \int_t^T Z_s^{\beta,\nu} dM_s - \int_t^T dN_s^{\beta,\nu}, \quad t\in [0,T].$$
Using comparison Theorem (Lemma \ref{lemma:comp}) and \cite[Proposition 3.2]{ElKarouiPengQuenez} (which clearly can be reproduced in the general continuous martingale BSDE setting) it holds that:
\begin{equation}
\label{eq:supthing}
U^p=\underset{\beta \in [-b,b], \; |\eta|\leq p}{\textrm{essusp}} Y^{\beta,\nu}.
\end{equation}
An application of integration by parts formula allows us to write for every processes $\beta \in [-b,b]$, $|\nu|\leq p$,
\begin{align*}
Y_t^{\beta,\nu}&= Y_T^{\beta,\nu} e^{-\int_t^T \beta_r dC_r} - \int_t^T e^{-\int_t^s \beta_r dC_r} (\frac{\gamma}{2} \nu_s q_s \nu_s^\ast -\eta_s) dC_s \\ 
&\quad - \int_t^T e^{-\int_t^s \beta_r dC_r} Z_s^{\beta,\nu} \underbrace{(dM_s - \gamma q_s^\ast q_s \nu_s^\ast dC_s)}_{=:dM_s^\nu} - \int_t^T e^{-\int_t^s \beta_r dC_r} dN_s^{\beta,\nu}
\end{align*}
Since $\nu$ is bounded, the process $L^\nu:=\mathcal{E}\left(\gamma \int_0^\cdot \nu_s dM_s\right)$ is a true martingale and we can define the probability measure $\mathbb{Q}^\nu$ as: $\frac{d\mathbb{Q}^\nu}{d\P}:=L^\nu$ under which $M^\nu$ and $N^{\beta,\nu}$ are true martingales. Hence we get:
$$ Y_t^{\beta,\nu} =\E_{\Q^\nu}\left[ \zeta e^{-\int_t^T \beta_r dC_r} - \int_t^T e^{-\int_t^s \beta_r dC_r} \eta_s dC_s - \frac{\gamma}{2} \int_0^t e^{-\int_t^s \beta_r dC_r} \nu_s q_s \nu_s^\ast dC_s \Big\vert \mathcal{F}_t\right]$$
which together with \eqref{eq:supthing} lead to
$$ U_t^p =\underset{\beta \in [-b,b], \; |\nu|\leq p}{\textrm{essusp}} \E_{\Q^\nu}\left[ \zeta e^{-\int_t^T \beta_r dC_r} - \int_t^T e^{-\int_t^s \beta_r dC_r} \eta_s dC_s - \frac{\gamma}{2} \int_0^t e^{-\int_t^s \beta_r dC_r} \nu_s q_s \nu_s^\ast dC_s \Big\vert \mathcal{F}_t\right].$$
Concerning (iii), note that for every processes $\beta$ and $\nu$ as above, it holds $\P$-a.s. that
\begin{align*}
&\zeta e^{-\int_t^T \beta_r dC_r} - \int_t^T e^{-\int_t^s \beta_r dC_r} \eta_s dC_s - \frac{\gamma}{2} \int_0^t e^{-\int_t^s \beta_r dC_r} \underbrace{\nu_s q_s \nu_s^\ast}_{\geq 0} dC_s\\
&\leq \zeta e^{-\int_t^T \beta_r dC_r} - \int_t^T e^{-\int_t^s \beta_r dC_r} \eta_s dC_s \leq  e^{b} (\|\zeta\|_\infty+a)
\end{align*}   
where we have used the fact that $q$ is a positive semidefinite matrix and that $|C|\leq \frac12$. This ends the proof of (iii). It remains to prove (ii) which is a direct consequence of comparison Theorem (Lemma \ref{lemma:comp}) and of the fact that by construction,  $|q_p| \leq |q_{p+1}|$ for every $p\geq 1$.\\\\
\noindent 
\textbf{Step 2:} Let $f:\Omega \times [0,T]\times \real\times \real^d \to \real$ be a continuous function in $(y,z)$ such that there exists $p\geq 1$ satisfying:
\begin{equation}
\label{eq:growth1}
-q_p(s,y,z) \leq f(t,y,z) \leq b |y| + \frac{\gamma}{2} |z|^2 + \eta_s, \quad \forall (s,y,z) \in [0,T]\times \real\times\real^d
\end{equation}
where $\eta$ is a predictable process bounded by a positive constant $a$.
Let $n\geq \lfloor \max(b,p)\rfloor$. We set:
$$ f_n(s,y,z):=\inf_{u,w}\{f(s,u,w) + n |y-u| + n|z-w|\}, \quad (s,y,z) \in [0,T]\times \real\times \real^d. $$
Let $(Y^n,Z^n,N^n) \in \mathcal{S}^2\times\mathcal{H}^2\times \mathcal{O}^2$ be the solution to the Lipschitz BSDE 
$$ Y_t^n=\zeta + \int_t^T f_n(s,Y_s^n,Z_s^n q_s^\ast) dC_s - \int_t^T Z_s^n dM_s - \int_t^T dN_s^n, \quad t\in [0,T].$$
Then,
\begin{itemize}
\item[(iv)] The sequence $(Y^n)_n$ is increasing and $|Y^n| \leq |U^n| \leq \tilde{c}, \; \P-a.s.$,
where $\tilde{c}>0$ is the same constant given in (iii) of Step 1 (so it is independent of $n$), and $(U^n,V^n,W^n)$ denotes the solution to the BSDE with terminal condition $\zeta$ and driver $q_n$ studied in the previous Step.
\item[(v)] There exists a triple of processes $(\hat{Y}^p,\hat{Z}^p,\hat{N}^p) \in \mathcal{S}^2\times \mathcal{H}^2\times \mathcal{O}^2$ solution to the BSDE:
$$ \hat{Y}_t^p=\zeta + \int_t^T f(s,\hat{Y}_s^p,\hat{Z}_s^p q_s^\ast) dC_s - \int_t^T \hat{Z}_s^p dM_s - \int_t^T d\hat{N}_s^p, \quad t\in [0,T]$$
and $|\hat{Y}^p| \leq \tilde{c}, \; \P-a.s.$ where $\tilde{c}>0$ is a positive constant so in particular it is independent of $p$. In addition, 
$$ \sup_{\tau \leq T} \E\left[ \int_\tau^T |\hat{Z}_s^p q_s^\ast|^2 dC_s + \int_\tau^T d[\hat{N}^p]_s \vert \mathcal{F}_\tau \right] \leq \tilde{\tilde{c}} $$
with $\tilde{\tilde{c}}$ another positive constant which also does not depend on $p$, and the supremum runs over all $\mathcal{F}$-stopping time smaller than $T$. In other words, $\int_0^\cdot \hat{Z}^p_s dM_s +\hat{N}^p$ is a BMO-martingale.
\end{itemize}
Since $n\geq p$, it holds that $|f_n| \leq \max(q_p,q_n) = q_n$. In addition by definition, $f_n \leq f_{n+1}$. Hence, comparison theorem in conjuction with the fact that $|f_n(s,y,z)| \leq q_n(s,y,z)$ and (iii) of Step 1, imply that $(Y^n)_n$ is increasing and $|Y^n| \leq |U^n| \leq \tilde{c}, \; \P-a.s.$. 
Now we turn out to the proof of claim (v). We have seen that the sequence $(Y^n)$ is increasing and bounded by a universal constant $\tilde{c}>0$. Hence the process $\hat{Y}^p:=\lim_{n\to\infty} Y^n$ is well defined and belongs to $\mathcal{S}^2$ since it is bounded. In addition, by construction $\lim_{n\to \infty} \|Y^n-\hat{Y}^p\|_{\mathcal{S}^2}=0$. By definition, $Y^n$ can be decomposed as $Y^n=Y_0^n + P^n + A^n$ with $P^n:=\int_0^\cdot Z^n_s dM_s + N_s^n$ and $A^n:=\int_0^\cdot f_n(s,Y_s^n,Z_s^n q_s^\ast) dC_s$. In order to prove the claim we will control each of these components and then use a compacity argument obtained by Barlow and Protter in \cite{BarlowProtter}. We start with the martingale part. We recall that $\tilde{c}>0$ is a constant independent on $n$ such that $|Y^n|<\tilde{c}, \P-a.s.$ for every $n\geq 1$. Define $\psi(x):=\frac{\exp(\gamma x)-1-\gamma x}{\gamma^2}$ and $\tilde{\psi}(x):=\psi(x+\tilde{c})$. For the estimate on the continuous part of $P^n$, we can adapt the argument of \cite[Lemma 3.1 (ii)]{Morlais}, and then we will complete the argument to get the estimate for its jump part. Let $\tau$ be a stopping time less or equal than $T$. Let $(\tau_k)_{k\geq 1}$ be a localizing sequence for the \textit{a priori} only local martingale $\int_0^\cdot \tilde{\psi}'(Y_{s-}) Z_s^n dM_s + \int_0^\cdot \tilde{\psi}'(Y_{s-}) dN^n_s$. Let $k \geq 1$. It\^o's formula yields
\begin{align*}
&\tilde{\psi}(Y_\tau^n) + \frac12 \int_\tau^{\tau_k} \tilde{\psi}''(Y_s^n) |Z_s^n q_s^\ast|^2 dC_s + \frac12 \int_\tau^{\tau_k} \tilde{\psi}''(Y_s^n) d[(N^n)^c]_s\\ 
&= \tilde{\psi}(Y_{\tau^k}^n) + \int_\tau^{\tau_k} \tilde{\psi}'(Y_s^n) f_n(s,Y_s^n,Z_s^n q_s^\ast) dC_s - \int_\tau^{\tau_k} \tilde{\psi}'(Y_s^n) Z_s^n dM_s - \int_\tau^{\tau_k} \tilde{\psi}'(Y_{s-}^n) dN^n_s\\ 
&\quad - \sum_{\tau<s\leq {\tau_k}} [\tilde{\psi}(Y_s^n)-\tilde{\psi}(Y_{s-}^n) - \tilde{\psi}'(Y_{s-}^n) \Delta_s Y^n]  
\end{align*}  
where $(N^n)^c$ denotes the continuous martingale part of $N^n$.
Now we compute the jump part of the formula above to get that is is non-negative. Indeed, denoting for simplicity $\tilde{Y}:=Y^n+\tilde{c}$, we get
\begin{align*}
&\sum_{\tau<s\leq {\tau_k}} [\tilde{\psi}(Y_s^n)-\tilde{\psi}(Y_{s-}^n) - \tilde{\psi}'(Y_{s-}^n) \Delta_s Y^n]\\
&=\sum_{\tau<s\leq {\tau_k}} \frac{e^{\gamma \tilde{Y}_{t-}}}{\gamma^2}[e^{\gamma \Delta_t \tilde{Y}}- 1 -\gamma \Delta_t \tilde{Y}] = \sum_{0<s\leq T} \frac{e^{\gamma \tilde{Y}_{t-}}}{\gamma^2}\Phi(\gamma \Delta_t \tilde{Y})
\end{align*}   
with $\Phi(x):=e^x-1-x$ which is non negative for every real number $x$. Hence coming back to the computations above we get that
\begin{align*}
&\frac12 \int_\tau^{\tau_k} \tilde{\psi}''(Y_s^n) |Z_s^n q_s^\ast|^2 dC_s + \frac12 \int_\tau^{\tau_k} \tilde{\psi}''(Y_s^n) d[(N^n)^c]_s\\ 
&\leq \tilde{\psi}(Y_T^n) + \int_\tau^{\tau_k} \tilde{\psi}'(Y_s^n) f_n(s,Y_s^n,Z_s^n q_s^\ast) dC_s - \int_\tau^{\tau_k} \tilde{\psi}'(Y_s^n) Z_s^n dM_s - \int_\tau^{\tau_k} \tilde{\psi}'(Y_{s-}^n) dN^n_s.
\end{align*}  
Now remark that $\psi'(x)\geq 0$ for $x\geq 0$, which implies in conjunction with the growth condition on $f_n$ (\textit{i.e.} $|f_n(t,y,z)| \leq b|y|+\frac{\gamma}{2}|z|^2$) and with the fact that $Y^n+\tilde{c} \geq 0$, that 
\begin{align*}
&\frac12 \int_\tau^{\tau_k} \tilde{\psi}''(Y_s^n) |Z_s^n q_s^\ast|^2 dC_s + \frac12 \int_\tau^{\tau_k} \tilde{\psi}''(Y_s^n) d[(N^n)^c]_s\\ 
&\leq \tilde{\psi}(Y_T^n) + \int_\tau^{\tau_k} \tilde{\psi}'(Y_s) (b|Y_s^n|+\frac{\gamma}{2}|Z_s^n q_s^\ast|^2 + \eta_s) dC_s - \int_\tau^{\tau_k} \tilde{\psi}'(Y_s) Z_s^n dM_s - \int_\tau^{\tau_k} \tilde{\psi}'(Y_{s-}) dN^n_s.
\end{align*}  
Hence 
\begin{align*}
&\frac12 \int_\tau^{\tau_k} |Z_s^n q_s^\ast|^2 (\tilde{\psi}''(Y_s^n)-\gamma \tilde{\psi}'(Y_s)) dC_s + \frac12 \int_\tau^{\tau_k} \tilde{\psi}''(Y_s^n) d[(N^n)^c]_s\\ 
&\leq \tilde{\psi}(Y_T^n) + \int_\tau^{\tau_k} (b \tilde{\psi}'(Y_s^n) |Y_s^n|+\eta_s) dC_s - \int_\tau^{\tau_k} \tilde{\psi}'(Y_s^n) Z_s^n dM_s - \int_0^{\tau_k} \tilde{\psi}'(Y_{s-}^n) dN^n_s.
\end{align*} 
In addition, by definition of $\tilde{\psi}$, we have that $\tilde{\psi}''-\gamma \tilde{\psi}'=1$, hence the previous inequality reads as
\begin{align*}
&\frac12 \int_\tau^{\tau_k} |Z_s^n q_s^\ast|^2 dC_s + \frac12 \int_\tau^{\tau_k} \tilde{\psi}''(Y_s) d[(N^n)^c]_s\\ 
&\leq \tilde{\psi}(Y_T^n) + \int_\tau^{\tau_k} (b \tilde{\psi}'(Y_s^n) |Y_s^n| + \eta_s) dC_s - \int_\tau^{\tau_k} \tilde{\psi}'(Y_s^n) Z_s^n dM_s - \int_\tau^{\tau_k} \tilde{\psi}'(Y_{s-}^n) dN^n_s.
\end{align*} 
Now taking conditional expectation with respect to $\mathcal{F}_\tau$ in the previous expression, using the fact that $Y^n$ is bounded by $\tilde{c}$ and noting that $\tilde{\psi}''\geq 0$, we finally get a constant $c_1>0$ depending only on the data of the equation and independent on $n$ such that $ \E\left[\int_\tau^{\tau_k} |Z_s^n q_s^\ast|^2 dC_s \Big\vert \mathcal{F}_\tau \right]\leq c_1.$
Taking the limit in the previous expression and using monotone convergence Theorem we get that $ \E\left[\int_\tau^T |Z_s^n q_s^\ast|^2 dC_s \Big\vert \mathcal{F}_\tau \right]\leq c_1$ 
and hence
\begin{equation}
\label{eq:estimatequadraZ}
\textrm{esssup}_{0\leq \tau \leq T} \E\left[\int_\tau^T |Z_s^n q_s^\ast|^2 dC_s \Big\vert \mathcal{F}_\tau \right]\leq c_1.
\end{equation}  
To complete our estimate for the term $P^n$ we now need an estimate on the quadratic variation of the orthogonal martingale $N^n$. This can be done as follows. Applying It\^o's formula to $|Y^n|^2$ we get that
\begin{align*}
&|Y_\tau^n|^2 + \int_\tau^T d[N^n]_s + \int_\tau^T |Z_s^n q_s^\ast|^2 dC_s \\
&=|Y_T^n|^2 + 2 \int_\tau^T Y_s^n f_n(s,Y_s^n,Z_s^nq_s^\ast) dC_s - 2 \int_\tau^T Y_s^n Z_s^n dM_s - 2 \int_\tau^T Y_{s-}^n dN_s^n 
\end{align*}
which leads to
\begin{align*}
&\E\left[\int_\tau^T d[N^n]_s \Big\vert \mathcal{F}_\tau \right] \leq \E[|Y_T^n|^2 \vert \mathcal{F}_\tau] + 2 \E\left[\int_\tau^T Y_s^n f_n(s,Y_s^n,Z_s^nq_s^\ast) dC_s \Big\vert \mathcal{F}_\tau\right]. 
\end{align*}
Using once again the growth condition on the driver $f_n$ and the fact that $Y^n$ is bounded by $\tilde{c}>0$ (which does not depend on $n$) we get that
$$ \E\left[\int_\tau^T d[N^n]_s \Big\vert \mathcal{F}_\tau \right] \leq c \left(1 + \E\left[\int_\tau^T |Z_s^n q_s^\ast|^2 dC_s \Big\vert \mathcal{F}_\tau\right]\right)$$
which leads to 
$$ \textrm{esssup}_{0\leq \tau \leq T} \E\left[\int_\tau^T d[N^n]_s \Big\vert \mathcal{F}_\tau \right] \leq c_2 $$
by \eqref{eq:estimatequadraZ}, where $c_2>0$ is a constant which only depends on $a$, $b$, $\gamma$ and $\|\zeta\|_\infty$.
Hence, we have proved that
$$ \textrm{esssup}_{0 \leq \tau \leq T} \E\left[ \int_\tau^T d[P^n]_s \Big\vert \mathcal{F}_\tau\right] = \textrm{esssup}_{0 \leq \tau \leq T} \E\left[ \int_\tau^T |q_s Z_s^n|^2 dC_s + \int_\tau^T d[N^n]_s \Big\vert \mathcal{F}_\tau \right] \leq c_3,$$
$c_3:=c_1+c_2$, so each $P^n$ is a BMO martingale whose BMO norm is uniformly bounded with respect to $n$ (indeed note that $\Delta P^n=\Delta N^n = \Delta Y^n$ and $Y^n$ is bounded by $\tilde{c}>0$, hence $|\Delta N^n| \leq 2 \tilde{c}$).
Now as a particular case of the previous result (taking $\tau=0$), using Burkholder-Davis-Gundy's inequality we have that:
$$ \E\left[\sup_{t\in [0,T]} |P^n_t|^2\right] \leq c \E[[P^n]_T] \leq \tilde{c}_3$$
where $\tilde{c}_3>0$ does not depend on $n$.
We now turn to a uniform control (with respect to $n$) of the the finite variation part $A^n$. Indeed, using the growth condition on $f_n$ and \eqref{eq:estimatequadraZ}, we have that 
\begin{align*}
\E[|A_T^n|]=\E\left[\left| \int_0^T f_n(s,Y_s^n,Z_s^n) dC_s \right|\right]&\leq \E\left[\int_0^T (b |Y_s^n|+\frac{\gamma}{2} |Z_s^n|^2 +\eta_s) dC_s \right]\\
&\leq c \left(1+\E\left[\int_0^T |Z_s^n q_s^\ast|^2 dC_s \right] \right) \leq c.
\end{align*}
Hence we have proved that there exists a constant $c>0$ independent on $n$ such that $\E[\sup_{t\in [0,T]} |P_t^n| + |A_T|]\leq c$ for every $n$. In addition since $\hat{Y}^p\overset{\mathcal{S}^2}{=} \lim_{n\to\infty} Y^n$, by \cite[Theorem 1]{BarlowProtter}, there exist a martingale $P$ and a finite variation process $A$ such that $\lim_{n\to \infty} \E[[P^n-P]_T^{1/2}]=0$, $\lim_{n\to\infty} \E[\sup_{t\in [0,T]} |A_t^n-A_t| =0$ and $\hat{Y}^p=P+A$. Since the driver $f$ is assumed to be continuous we get that, there exists a process $\hat{Z}^p$ in $\mathcal{H}^1$ and a martingale $\hat{N}^p$ with $\E[[N]_T^{1/2}]<\infty$ such that
$$ \hat{Y}^p_t=\zeta + \int_t^T f(s,\hat{Y}^p_s,\hat{Z}^p_s) dC_s - \int_t^T \hat{Z}^p_s dM_s - \int_t^T d\hat{N}^p_s, \quad t\in [0,T].$$
In addition, $|\hat{Y}^p| \leq \tilde{c}$ by construction. By reproducing the computations of this step with $(\hat{Y}^p,\hat{Z}^p,\hat{N}^p)$ and using the fact that $\hat{Y}^p$ is bounded we get that 
$$\textrm{esssup}_{0\leq \tau \leq T}\E\left[ \int_\tau^T |\hat{Z}^p_s q_s^\ast|^2 dC_s + \int_\tau^T d[\hat{N}^p]_s \Big\vert \mathcal{F}_\tau\right] \leq c_3$$
where $c_3$ is the very same constant obtained above which means that $\int_0^\cdot \hat{Z}^p_s dM_s + \hat{N}^p$ is a BMO martingale (since the jumps of $\hat{N}^p$ are bounded by $2 \tilde{c}$) and also that $(\hat{Z}^p,\hat{N}^p)\in \mathcal{H}^2 \times \mathcal{O}^2$. This is in fact a reformulation of the well-known link in the Brownian setting between a bounded terminal condition and the BMO property for the martingale part of the solution to a BSDE. 
\\\\
\noindent 
\textbf{Step 3:}
Let $f:[0,T]\times \real\times \real^d \to \real$ be a continuous function in $(y,z)$ such that the following growth condition is satisfied:
$$|f(t,y,z)| \leq \alpha |y| + \frac12 |z|^2 + \eta_s, \quad \forall (s,y,z) \in [0,T]\times \real\times\real^d.$$
Then there exists a triple of processes $(Y,Z,N) \in \mathcal{S}^2\times \mathcal{H}^2\times \mathcal{O}^2$ solution to the BSDE:
$$ Y_t=\zeta + \int_t^T f(s,Y_s,Z_s q_s^\ast) dC_s - \int_t^T Z_s dM_s - \int_t^T dN_s, \quad t\in [0,T].$$
In addition, there exists a constant $c_4>0$ which only depends on $\|\zeta\|_\infty$, $b$, and $\gamma$ such that $|Y| \leq c_4, \; \P-a.s.$, and the process $\int_0^\cdot Z_s dM_s + N$ is a BMO martingale.\\\\
\noindent
To prove this claim we use the usual decomposition $f=f^+-f^-$ for $f$. Now let $f^{-,p}(s,y,z):=\inf_{u,w}\{f^-(s,y,z) + p |y-u| + p |z-w|\}$. The function $f^{-,p}$ satisfies condition \eqref{eq:growth1} and we denote by $(\hat{Y}^p,\hat{Z}^p,\hat{N}^p)$ one solution to the BSDE with terminal condition $\zeta$ and driver $f^+-f^{-,p}$ obtained in Step 2. By comparison Theorem, the sequence $(\hat{Y}^p)_p$ is decreasing. In addition we have proved that each process $\hat{Y}^p$ is bounded by a universal constant $\tilde{c}>0$ which does not depend on $p$. As a consequence we can define $Y:=\lim_{p\to \infty} \hat{Y}^p$ and the convergence also holds in $\mathcal{S}^2$. 
Using again point (v) of Step 2, there exists a constant $\tilde{\tilde{c}}>0$ independent on $p$ such that:
$$ \textrm{esssup}_{0\leq \tau T}\E\left[ \int_\tau^T |\hat{Z}^p_s q_s^\ast|^2 dC_s + \int_\tau^T d[\hat{N}^p]_s \Big\vert \mathcal{F}_\tau \right] \leq \tilde{\tilde{c}}$$
which in conjuction with Burkholder-Davis-Gundy inequality implies that
$$ \E\left[\sup_{t\in [0,T]} \left|\int_0^t \hat{Z}^p_s dM_s + \hat{N}^p_t\right|\right] \leq c $$
where $c$ is a constant independent on $p$. Finally, we have that
$$ \E\left[ \left\vert \int_0^T f^{-,p}(s,\hat{Y}^p_s,\hat{Z}^p_s q_s^\ast) dC_s \right\vert \right] \leq c \left(1+\E\left[ \int_0^T |\hat{Z}^p_s q_s^\ast|^2 dC_s \right]\right) \leq c$$
where $c$ is a again a constant independent on $p$ which comes from the previous estimates. Hence using \cite[Theorem 1]{BarlowProtter}, there exists a process $Z$ in $\mathcal{H}^1$ and a martingale $N$ orthogonal to $M$ with $\E[[N]_T^{1/2}]<\infty$ such that
$$ Y_t=\zeta + \int_t^T f(s,Y_s,Z_s q_s^\ast) dC_s - \int_t^T Z_s dM_s - \int_t^T dN_s, \quad t\in [0,T].$$
Once again since $Y$ is bounded, we can reproduce the computations of Step 2 to prove that $(Z,N)$ belongs to $\mathcal{H}^2\times \mathcal{O}^2$ and even to prove that $\int_0^\cdot Z_s dM_s + N$ is a BMO martingale.     
\end{proof}

\begin{remark}
Here we did not address the question of uniqueness of solutions to keep the length of this paper within limits but we believe that under standard additional assumptions of the type \cite[Assumption (H2), p.~128]{Morlais} on the driver, the uniqueness could be obtained. This point is left for future research.   
\end{remark}

\section{Solving Markovian quadratic BSDEs without orthogonal component}
\label{section:BSDEwithout}

We now are in position to extend the weak martingale representation property obtained in Section \ref{section:wmr} to solving qgBSDEs driven by a $d$-dimensional strong Markov process $M$ which is also a martingale under a strong Markov filtration $(\mathcal{F}_{t\in [0,T]})$ (in the sense of \cite[(3.4)]{CinlarJacodProtterSharpe}) as described in Section \ref{section:preliminaries}. On the filtered probability space $(\Omega,\mathcal{F},(\mathcal{F}_t)_{t\in [0,T]},\P)$, we now consider in addition to the square integrable continuous martingale $M$, a stochastic process $X^{t,x,m}:=(X_s^{t,x,m})_{s\in [t,T]}$ defined as the unique strong solution of the following $n$-dimensional stochastic differential equation
\begin{equation}
\label{SDE}
X_s^{t,x,m}=x+\int_t^s \sigma(r,X_r^{t,x,m},M_r^{t,m}) dM_r + \int_t^s b(r,X_r^{t,x,m},M_r^{t,m}) dC_r, \quad s \in[t,T], \; t \in [0,T]
\end{equation}
where $\sigma: [0,T]\times \real^n \times \real^d \to \real^{n\times d}$, $b:[0,T]\times \real^n \times \real^d \to \real$ are deterministic functions of class $C^{0,1}([0,T]\times (\real^n \times \real^d))$ with locally Lipschitz partial derivatives in $x$ and $m$ uniformly in time, and such that there exists a positive constant $k$ satisfying 
\begin{align*}
&\vert \sigma(t,x_1,m_1)-\sigma(t,x_2,m_2) \vert + \vert b(t,x_1,m_1)-b(t,x_2,m_2) \vert\\
&\leq k (\vert x_1-x_2 \vert+\vert m_1-m_2 \vert), \quad \forall (t,x_1,x_2,m_1,m_2) \in [0,T] \times (\real^d)^2 \times (\real^n)^2,
\end{align*} 
where we use the notation $|\cdot|$ for both the Euclidian norm on $\real^n$ and on $\real^d$. 
Let us finally introduce the object of interest of this section that is the following BSDE coupled with the forward process $X^{t,x,m}$ as
\begin{align} 
\label{BSDE}
Y_s^{t,x,m}=&F(X_T^{t,x,m},M_T^{t,m})-\int_s^T Z_r^{t,x,m} dM_r + \int_s^T f(r,X_r^{t,x,m},M_r^{t,m},Y_r^{t,x,m},Z_r^{t,x,m}) dC_r\nonumber\\
&-\int_s^T dN_r^{t,x,m},
\end{align}
where $F:\real^n \times \real^d \to\real$ is a bounded deterministic function and the generator $f: [0,T] \times \real^n \times \real^d \times \real \times \real^d \to \real$ is assumed to be $\mathcal{B}([0,T])\otimes \mathcal{B}(\real^n \times \real^d \times \real \times \real^d)$-measurable ($\mathcal{B}(E)$ denoting the Borel $\sigma$-filed on an Euclidian space $E$), so that $f(r,x,m,y,z)$ is deterministic for non-random $(r,x,m,y,z)$ in $[0,T]\times\real^n \times \real^d \times \real \times \real^d$. In the main result of this section, we will make use of the assumptions below. For convenience, we set: $\Theta:=\real^d\times \real^n\times \real\times \real^{d}$.\\\\ 
\noindent
\textbf{(QG):} The driver $f:[0,T]\times \Theta \to \real$ is continuous in $(y,z)$ and there exist a nonnegative predictable process $\eta=(\eta_t)_{t\in [0,T]}$ bounded by a positive constant $a$, as well as positive constants $b$ and $\gamma >0 $ such that
$$ |f(t,x,m,y,z)| \leq \eta_t (1+b |y|) +\frac{\gamma}{2} |z|^2, \quad \forall (x,m,y,z) \in \Theta, \; \P\otimes dC_t-a.e..$$
In addition, for every $\beta>1$, $\int_0^T |f(t,0,0,0,0)| dC_t $ belongs to $L^\beta(d\P)$.\\\\   
\textbf{(D1):} The functions $F:\real^n \times \real^d \to \real$, $\partial_x F:\real^n \times \real^d \to \real^n$ and $\partial_m F:\real^n \times \real^d \to \real^d$ are globally Lipschitz.\\\\
\noindent
\textbf{(D2):} The driver $f:[0,T]\times \Theta$ is differentiable in $(x,m,y,z)$, there exist a positive constant $r$ such that $d\P\otimes dC_t-a.e.$ it holds:
\begin{align*}
&|\partial_{a} f(t,x,m,y,z)| \leq r (1 + |z|), \quad \forall (x,m,y,z) \in \Theta, \; a \in \{x,m,y,z\}, \textrm{ and }\\
&|\partial_{a} f(t,x_1,m_1,y_1,z_1)-\partial_{a} f(t,x_2,m_2,y_2,z_2)| \\
&\leq r (|q_t \theta_t| + |z_1| + |z_2|) (|x_1-x_2|+|m_1-m_2|+|y_1-y_2|+|z_1-z_2|), \quad a \in \{x,m,y\},
\end{align*}
for every $(x_i,m_i,y_i,z_i) \in \Theta, \; i=1,2$. Finally $\partial_z f$ is Lipschitz in $(x,m,y,z)$ uniformly in time. \\\\
\noindent
We have shown in Theorem \ref{th:existence} that under assumption (QG), there exists a triple $(Y^{t,x,m}, Z^{t,x,m}, N^{t,x,m})$ in $\mathcal{S}^2\times \mathcal{H}^2\times \mathcal{O}^2$ solution to the BSDE \eqref{BSDE}.

\subsection{Markov property of the solution}
\label{sub:Markov}

We start with an important fact about Markov processes which can be found in \cite[Theorem (8.11)]{CinlarJacodProtterSharpe} or in \cite[V. Theorem 35]{Protter}. 
\begin{prop}
The process $(X_s^{t,x,m},M_s^{t,m})_{s\in [t,T]}$ is a strong Markov process for the filtration $(\mathcal{F}_t)_{t\in [0,T]}$. If in addition $M$ is assumed to have independent increments then the stochastic process $(X_s^{t,x,m})_{s\in [t,T]}$ is a strong Markov process.
\end{prop}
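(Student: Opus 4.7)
The plan is to reduce the statement to the standard Markovian theory for SDEs driven by a Markov martingale, essentially following the framework of the cited references. The key observation is that the pair $(X^{t,x,m},M^{t,m})$ can itself be viewed as the unique strong solution of an $(n+d)$-dimensional SDE driven by $M$, with deterministic coefficients depending only on time and state, namely
$$
d\begin{pmatrix}X_s\\ M_s\end{pmatrix}=\begin{pmatrix}\sigma(s,X_s,M_s)\\ \mathrm{Id}_{d}\end{pmatrix}dM_s+\begin{pmatrix}b(s,X_s,M_s)\\ 0\end{pmatrix}dC_s,\qquad (X_t,M_t)=(x,m).
$$
Under the Lipschitz assumption on $\sigma$ and $b$, a Picard iteration produces a measurable flow $\Phi_{t,s}:\real^n\times\real^d\times\Omega\to\real^n\times\real^d$ such that $(X_s^{t,x,m},M_s^{t,m})=\Phi_{t,s}(x,m,\cdot)$, with $\Phi_{t,s}(x,m,\cdot)$ measurable with respect to the $\sigma$-field generated by the increments $(M_r-M_t)_{r\in[t,s]}$.

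The main body of the proof would then proceed in three steps. First, exploit the uniqueness of the strong solution to obtain the flow identity $\Phi_{t,s}(x,m,\cdot)=\Phi_{u,s}\bigl(X_u^{t,x,m},M_u^{t,m},\cdot\bigr)$ a.s.\ for $t\le u\le s$. Second, invoke the assumption that $(\mathcal{F}_t)$ is a Markov filtration for $M$ in the sense of \cite[(3.4)]{CinlarJacodProtterSharpe}, which says that the conditional law of $(M_r)_{r\in[u,s]}$ given $\mathcal{F}_u$ coincides with its conditional law given $M_u$. Combining these two facts, for any bounded Borelian $g:\real^n\times\real^d\to\real$ one gets
$$
\E\bigl[g(X_s^{t,x,m},M_s^{t,m})\bigm|\mathcal{F}_u\bigr]=G_{u,s}(X_u^{t,x,m},M_u^{t,m}),\qquad G_{u,s}(\xi,\mu):=\E\bigl[g(\Phi_{u,s}(\xi,\mu,\cdot))\bigr],
$$
which is the Markov property for the pair. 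Third, upgrade this to the strong Markov property by approximating an arbitrary stopping time $\tau$ by the standard sequence of dyadic stopping times taking countably many values and passing to the limit using the right-continuity of $\mathcal{F}$ and of the sample paths of $(X,M)$; at each step one uses the strong Markov property of $M$ itself, which is built into the hypothesis.

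For the second assertion, the additional independent-increments property of $M$ implies that the increment process $(M_r-M_t)_{r\in[t,T]}$ is independent of $\mathcal{F}_t$. Because $\Phi_{t,s}(x,m,\cdot)$ is measurable with respect to these increments, the conditional law of $(X_s^{t,x,m})_{s\in[t,T]}$ given $\mathcal{F}_t$ depends only on $(x,m)$; rewriting $\Phi_{t,s}(x,m,\cdot)$ through its dependence on the current state $X_t=x$ and the future noise alone, and applying the same flow/conditioning argument as above, yields the strong Markov property of $(X_s^{t,x,m})_{s\in[t,T]}$ on its own.

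The main technical obstacle is ensuring joint measurability of the flow $\Phi_{t,s}$ in $(x,m,\omega)$ and, crucially, that it depends on $\omega$ only through the post-$t$ increments of $M$; this is what permits the separation between the conditioning on $\mathcal{F}_u$ and the randomness used in $\Phi_{u,s}$. This point is precisely the content of \cite[Theorem (8.11)]{CinlarJacodProtterSharpe} and \cite[V. Theorem 35]{Protter}, so rather than reproducing the measurability construction we would simply verify that our Lipschitz/locally Lipschitz hypotheses on $\sigma$ and $b$ meet the standing assumptions of those theorems and cite them.
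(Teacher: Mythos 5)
The paper itself offers no proof of this proposition: it is stated as a known fact and attributed entirely to \cite[Theorem (8.11)]{CinlarJacodProtterSharpe} and \cite[V. Theorem 35]{Protter}. Your outline is the standard flow argument that underlies those references, and your final paragraph correctly identifies that the real content is the measurability-of-the-flow and conditioning step, which you then delegate to the same citations --- so in substance you end exactly where the paper starts, and the approach is the same.

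Two points in the added sketch nevertheless need repair. First, the displayed definition $G_{u,s}(\xi,\mu):=\E\bigl[g(\Phi_{u,s}(\xi,\mu,\cdot))\bigr]$ uses an \emph{unconditional} expectation; this is legitimate only when the post-$u$ increments of $M$ are independent of $\mathcal{F}_u$, i.e.\ precisely under the extra hypothesis of the second assertion. For a general Markov martingale the conditional law of $(M_r-M_u)_{r\in[u,s]}$ given $\mathcal{F}_u$ is a kernel in $M_u$, not a fixed law, so one must set $G_{u,s}(\xi,\mu):=\E\bigl[g(\Phi_{u,s}(\xi,\mu,\cdot))\bigm| M_u=\mu\bigr]$ (equivalently, the expectation under the law of $M$ restarted at $\mu$ at time $u$). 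As written, your first step silently assumes what you only invoke for the second claim. Second, the assertion that $\Phi_{t,s}(x,m,\cdot)$ is measurable with respect to $\sigma\bigl((M_r-M_t)_{r\in[t,s]}\bigr)$ is not accurate for equation \eqref{SDE}: the drift is integrated against $dC_r$ with $C=\arctan(\sum_i[M^{(i)},M^{(i)}])$, and for $r>t$ one has $dC_r=\bigl(1+(\sum_i[M^{(i)},M^{(i)}]_r)^2\bigr)^{-1}d\bigl(\sum_i[M^{(i)},M^{(i)}]_r\bigr)$, which depends on $[M,M]_t$ --- an $\mathcal{F}_t$-measurable quantity that is neither a functional of the post-$t$ increments nor, in general, of $M_t$ alone. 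Handling this is exactly what the semimartingale--Markov framework of \cite{CinlarJacodProtterSharpe} is built for (the characteristics of $M$ are additive functionals determined by the state), and it is another reason why the bare Picard-iteration picture cannot replace a verification of the hypotheses of the cited theorems, which is all the paper does.
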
 
\noindent
We now want to prove that the Markov property of the pair $(X_s^{t,x,m},M_s^{t,m})_{s\in [t,T]}$ transfers to the solution of \eqref{BSDE}. The idea is usually to follow the proof of existence of a solution and to show \textit{on the way} that the solution process is given as a deterministic function of $(t,X,M)$. For the Lipschitz case, we can reproduce in our context the proof of \cite[Theorem 3.2]{IRR} without any changes since we use arguments which are valid for general martingales provided the process $C$ is continuous. However, in the quadratic case the proof of the Markovian feature of the solution given in \cite[Theorem 3.4]{IRR} was following the proof of existence given in \cite{Morlais}. However, with a possible discontinuous orthogonal component, this proof cannot be reproduced and we had to produce a new one. For that reason we will also have to give a new counterpart of \cite[Theorem 3.4]{IRR} in our setting. We first state the result in the Lipschitz case. 

\begin{prop}
\label{prop:MarkovpropLip}
Let $f:[0,T]\times \Theta \to \real$ such that there exists a constant $K>0$ such that
\begin{align*}
&|f(t,x_1,m_1,y_1,z_1)-f(t,x,m_2,y_2,z_2)| \\
&\leq K (|y_1-y_2|+|z_1-z_2|), \quad \forall (t,x_i,m_i,y_i,z_i) \in [0,T] \times \Theta, \; i=1,2
\end{align*}
and $\E\left[\int_0^T |f(t,0,0,0,0)|^2 dC_t \right]<\infty$.
Let $(Y^{t,x,m},Z^{t,x,m},N^{t,x,m})$ be the unique solution in $\mathcal{S}^2\times \mathcal{H}^2 \times \mathcal{O}^2$ of the BSDE
$$ Y_s^{t,x,m}=F(X_T^{t,x,m},M_T^{t,m})+\int_s^T f(r,X_r^{t,x,m},M_r^{t,m},Y_r^{t,x,m},Z_r^{t,x,m} q_r^\ast) dC_s - \int_s^T dN_r^{t,x,m}, \quad s\in [t,T].$$ 
Then, there exist two deterministic functions $u,v:[0,T] \times \real^n\times \real^d \to \real$, $\mathcal{B}([0,T]) \otimes \mathcal{B}_e(\real^n \times \real^d)$-measurable such that $(Y^{t,x,m},Z^{t,x,m})$ satisfy:
$$ Y_s^{t,x,m}=u(s,X_s^{t,x,m},M_s^{t,m}), \quad Z_s^{t,x,m}=v(s,X_s^{t,x,m},M_s^{t,m}), \quad s \in [t,T]$$
where $\mathcal{B}_e(\real^n\times \real^d)$ is the $\sigma$-field on $\real^n\times \real^d$ generated by the functions 
$$(x,m) \mapsto \E\L[\int_0^T \phi(s,X_s^{t,x,m},M_s^{t,m}) dC_s\R]$$ 
with $\phi:\Omega \times [0,T] \times \real^n\times \real^d \to \real$ a continuous bounded function.
\end{prop}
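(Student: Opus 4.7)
The plan is to propagate the Markov structure along the Picard scheme that produces the solution of this Lipschitz BSDE. Since $f$ is uniformly Lipschitz in $(y,z)$ and $\E\int_0^T |f(t,0,0,0,0)|^2 dC_t<\infty$, classical BSDE theory over a continuous martingale base (see \cite{ElKarouiHuang}) gives a contraction in $\mathcal{S}^2\times\mathcal{H}^2\times\mathcal{O}^2$. Starting from $(Y^0,Z^0,N^0)\equiv 0$, I would define iteratively $(Y^{n+1,t,x,m},Z^{n+1,t,x,m},N^{n+1,t,x,m})$ as the unique solution of the \emph{linear} BSDE
$$ Y_s^{n+1} = F(X_T^{t,x,m},M_T^{t,m}) + \int_s^T f(r, X_r^{t,x,m}, M_r^{t,m}, Y_r^n, Z_r^n q_r^\ast)\, dC_r - \int_s^T Z_r^{n+1} dM_r - \int_s^T dN_r^{n+1}, $$
equivalently, $Y^{n+1}_s$ is the $\mathcal{F}_s$-conditional expectation of terminal payoff plus driver integral, and $(Z^{n+1},N^{n+1})$ is read off from the Galtchouk-Kunita-Watanabe decomposition of the associated martingale $Y^{n+1}_\cdot + \int_0^\cdot f(r,\cdot)\, dC_r$ against $M$.

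I would then prove by induction on $n$ the existence of $\mathcal{B}([0,T])\otimes\mathcal{B}_e(\real^n\times\real^d)$-measurable functions $u^n, v^n$ with
$$ Y^{n,t,x,m}_s = u^n(s,X^{t,x,m}_s,M^{t,m}_s), \quad Z^{n,t,x,m}_s q^\ast_s = v^n(s,X^{t,x,m}_s,M^{t,m}_s),\quad s\in[t,T]. $$
The base case is trivial with $u^0=v^0\equiv 0$. For the inductive step, combining the flow property $X^{t,x,m}_r = X^{s, X^{t,x,m}_s, M^{t,m}_s}_r$ (for $t\leq s \leq r$) with the strong Markov property of $(X^{t,x,m},M^{t,m})$ recalled just before the proposition turns the conditional expectation defining $Y^{n+1,t,x,m}_s$ into the deterministic function
$$ u^{n+1}(s,\xi,\mu) = \E\L[ F(X^{s,\xi,\mu}_T, M^{s,\mu}_T) + \int_s^T f\bigl(r, X^{s,\xi,\mu}_r, M^{s,\mu}_r, u^n(r,X^{s,\xi,\mu}_r,M^{s,\mu}_r), v^n(r,X^{s,\xi,\mu}_r,M^{s,\mu}_r)\bigr)\, dC_r \R] $$
evaluated at $(\xi,\mu)=(X^{t,x,m}_s,M^{t,m}_s)$. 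A parallel argument at the level of the Galtchouk-Kunita-Watanabe projection of the associated martingale onto $M$ produces $v^{n+1}$ as a state function.

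Finally I would pass to the limit. The contraction yields $(Y^n,Z^n,N^n)\to (Y,Z,N)$ in $\mathcal{S}^2\times\mathcal{H}^2\times\mathcal{O}^2$, so along a subsequence $Y^{n_k}$ converges uniformly in $s$ almost surely and $Z^{n_k}q^\ast$ converges $d\P\otimes dC_s$-almost everywhere. Pointwise limits of $u^{n_k}$ and $v^{n_k}$ on the state space then yield the desired $u$ and $v$. The main obstacle here is the measurability of $v$ rather than of $u$: because $Z$ is only defined up to $d\P\otimes dC_s$-null sets, a naive pointwise limit in the state variables need not be Borel measurable. This is precisely the reason for introducing the $\sigma$-field $\mathcal{B}_e(\real^n\times\real^d)$, which is coarser than the Borel $\sigma$-field and captures exactly those sets that are well-defined after averaging along the $(X,M)$-flow via the generators $(x,m)\mapsto\E[\int_0^T \phi(s,X^{t,x,m}_s,M^{t,m}_s) dC_s]$. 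Following the approach of \cite{IRR}, one identifies $v^{n_k}$ through its action on these generators and extracts a $\mathcal{B}([0,T])\otimes\mathcal{B}_e$-measurable limit, which closes the argument.
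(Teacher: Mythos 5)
Your proposal is correct and follows essentially the same route as the paper: the paper's proof simply defers to \cite[Proposition 3.2]{IRR} (noting only that continuity of the filtration is not needed there), and that reference establishes the Markov property by exactly the Picard-iteration scheme you describe --- propagating the state-function representation of $(Y^n,Z^n)$ through the linear iterates via the flow and strong Markov properties, and passing to the limit with the $\mathcal{B}_e$ $\sigma$-field absorbing the measurability issue for $v$. Nothing in your argument departs from, or adds a gap relative to, the paper's intended proof.
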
 

\begin{proof}
The proof follows the line of the one of \cite[Proposition 3.2]{IRR}. Note that in \cite{IRR} the filtration was assumed to be continuous. However, for the particular result \cite[Proposition 3.2]{IRR} this assumption is not needed. Only the fact that $N$ is a martingale is important. So we can reproduce every argument of the proof without any modification. In order to keep the length of this paper within limits, we leave this point to the reader.  
\end{proof}

Now we deal with the quadratic case and give a counterpart to \cite[Theorem 3.4]{IRR}. 

\begin{theorem}
\label{th:Markovprop}
Assume that the driver $f$ satisfies (QG).
Let $(Y^{t,x,m},Z^{t,x,m},N^{t,x,m})$ be a solution in $\mathcal{S}^2\times \mathcal{H}^2 \times \mathcal{O}^2$ to the BSDE \eqref{BSDE} with driver $f$. Then there exists a deterministic function $u:[0,T] \times \real^n\times \real^d \to \real$, $\mathcal{B}([0,T]) \otimes \mathcal{B}_e(\real^n \times \real^d)$-measurable such that
$$ Y_s^{t,x,m}=u(s,X_s^{t,x,m},M_s^{t,m}), \quad s \in [t,T].$$
\end{theorem}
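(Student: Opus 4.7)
The plan is to transport the Markov property through the two-stage approximation scheme used in the proof of Theorem \ref{th:existence}, thereby reducing everything to the Lipschitz case covered by Proposition \ref{prop:MarkovpropLip}. Fix $p\geq 1$ and denote by $f_n^p$ the inf-convolution in $(y,z)$ (with $(t,x,m)$ frozen) of the driver $f^+ - f^{-,p}$ introduced in Steps~2--3 of the proof of Theorem \ref{th:existence}. Each $f_n^p$ is Borel in $(t,x,m)$, globally Lipschitz in $(y,z)$ with constant $n$, and dominated in modulus by $q_n$; since $\eta$ is bounded, $|f_n^p(t,0,0,0,0)|\leq \eta_t\leq a$, which ensures the integrability hypothesis of Proposition \ref{prop:MarkovpropLip}. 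Applying the latter to the BSDE with terminal condition $F(X_T^{t,x,m},M_T^{t,m})$ and driver $f_n^p$ yields deterministic $\mathcal{B}([0,T]) \otimes \mathcal{B}_e(\real^n \times \real^d)$-measurable functions $u_n^p$ such that
\[ Y_s^{n,p,t,x,m}=u_n^p(s,X_s^{t,x,m},M_s^{t,m}),\quad s\in [t,T],\; \P\text{-a.s.} \]

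By construction $f_n^p\leq f_{n+1}^p$, so Lemma \ref{lemma:comp} applied to the Lipschitz BSDEs started from each deterministic initial datum $(s,x,m)$ yields $u_n^p(s,x,m)\leq u_{n+1}^p(s,x,m)$ pointwise. The monotone limit $u^p(s,x,m):=\lim_n u_n^p(s,x,m)$ inherits the required measurability. Since Step~2 of the proof of Theorem \ref{th:existence} establishes $Y^{n,p,t,x,m}\to \hat{Y}^{p,t,x,m}$ in $\mathcal{S}^2$, passage to the limit gives $\hat{Y}_s^{p,t,x,m}=u^p(s,X_s^{t,x,m},M_s^{t,m})$. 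Because $f^+-f^{-,p}$ is decreasing in $p$, the same comparison argument shows $(u^p)_p$ decreases pointwise, and the $\mathcal{S}^2$-convergence $\hat{Y}^p\to Y^{t,x,m}$ from Step~3 of Theorem \ref{th:existence} then closes the proof with $u:=\lim_p u^p$.

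The main delicate point is ensuring that the monotonicities of the deterministic $u_n^p$ in $n$ (and of $u^p$ in $p$) hold \emph{pointwise on} $[0,T]\times \real^n \times \real^d$ and not merely $\P$-a.s.\ for the specific initial condition $(t,x,m)$ originally fixed. This forces one to apply the comparison theorem to the whole family of BSDEs indexed by all deterministic starting points, which is precisely what the strong Markov property of $(X,M)$ enables. A routine auxiliary point is to verify that the inf-convolution $f_n^p$ preserves joint Borel measurability in all arguments; this follows from standard arguments since the inf-convolution of a jointly Borel function is Borel.
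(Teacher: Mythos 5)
Your proposal is correct and follows essentially the same route as the paper: reduce to the Lipschitz case through the two-stage approximation scheme of Theorem \ref{th:existence}, invoke Proposition \ref{prop:MarkovpropLip} for each Lipschitz BSDE, and pass to the limit in $n$ and then in $p$. The only difference is cosmetic: the paper defines $u^p:=\limsup_{n}u^n$ and $u:=\limsup_{p}u^p$, which preserves measurability and automatically agrees $\P$-a.s.\ with the limit of $Y^{n}$ along the paths of $(X,M)$, so the pointwise-everywhere monotonicity of the $u_n^p$ that you establish via the comparison theorem applied at every deterministic starting point, while valid, is not actually needed.
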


\begin{proof}
Since the construction of the solution to the BSDE \eqref{BSDE} differs from the one used in \cite{IRR} we produce here another proof which is in fact slightly simpler. We follow the steps and we use exactly the same notations of the proof of the existence result: Theorem \ref{th:existence}. We first assume that the driver $f$ satisfies \eqref{eq:growth1} like in Step 2 of the proof of Theorem \ref{th:existence}. To be more precise we know have that $f$ in Step 2 has a Markovian structure that is:
$$ f(s,y,z)(\omega)=f(s,X_s^{t,x,m}(\omega),M_s^{t,m}(\omega),y,z)$$
with the growth condition \eqref{eq:growth1} for some $p\geq 1$. Now the solution $\hat{Y}^{p,t,x,m}$ to this BSDE with this driver $f$ is given as: $\hat{Y}^{p,t,x,m}=\lim_{n\to\infty} Y^{n,t,x,m}$ where $Y^{n,t,x,m}$ denotes the solution to the BSDE with terminal condition $F(X_T^{t,x,m},M_T^{t,m})$ and driver $f_n$: the inf convolution of $f$ with the function $(u,w) \mapsto n|u|+n|w|$. By construction $f_n$ is a Lipschitz function thus from Proposition \ref{prop:MarkovpropLip}, there exists a maps $u^n, v^n:[0,T] \times \real^n\times \real^d \to \real$, $\mathcal{B}([0,T]) \otimes \mathcal{B}_e(\real^n \times \real^d)$-measurable such that 
$$Y^{n,t,x,m}=u^n(\cdot,X^{t,x,m},M^{t,m}), \; Z^{n,t,x,m}=v^n(\cdot,X^{t,x,m},M^{t,m}), \; \P-a.s..$$ 
In particular it holds that $u^n(t,x,m)=Y_t^{n,t,x,m}$. Letting 
$$u^p(t,x,m):=\limsup_{n\to\infty} u^n(t,x,m), \; v^p(t,x,m):=\limsup_{n\to\infty} v^n(t,x,m)$$
we obtain that $u^p, v^p$ are $\mathcal{B}([0,T]) \otimes \mathcal{B}_e(\real^n \times \real^d)$-measurable and 
$$\hat{Y}^{p,t,x,m}=u^p(\cdot,X^{t,x,m},M^{t,m}), \; \hat{Z}^{p,t,x,m}=v^p(\cdot,X^{t,x,m},M^{t,m}), \; \P-a.s.$$
(the convergence of $(v^n)_n$ to $v^p$ follows from the fact that $Z^p$ is the limit in $\mathcal{H}^2$ of the $(Z^n)_n$).
\noindent
We know relax the assumption on $f$ made before and we just assume that $(s,x,m,y,z)\mapsto f(s,x,m,y,z)$ satisfies (QG). Then once again as in Step 3 of the proof of Theorem \ref{th:existence}, the solution process $Y$ can be approximated by a sequence of processes $\hat{Y}^{p,t,x,m}$ has above (\textit{i.e.} $Y^{t,x,m}=\lim_{n\to\infty} \hat{Y}^{p,t,x,m}, \; \P-a.s.$, and $Z^{t,x,m}=\lim_{p\to\infty} \hat{Z}^{p,t,x,m}$ in $\mathcal{H}^2$). From what we have proved before, there exist $\mathcal{B}([0,T]) \otimes \mathcal{B}_e(\real^n \times \real^d)$-measurable maps $u^p, v^p$ such that in particular $\hat{Y}^{p,t,x,m}=u^p(\cdot,X^{t,x,m},M^{t,m})$ and $\hat{Z}^{p,t,x,m}=v^p(\cdot,X^{t,x,m},M^{t,m})$. Hence if we define $u(t,x,m):=\limsup_{p\to \infty} u^p(t,x,m)$ and $v(t,x,m):=\limsup_{p\to \infty} v^p(t,x,m)$ we get $u, v$ are $\mathcal{B}([0,T]) \otimes \mathcal{B}_e(\real^n \times \real^d)$ and $Y=u(t,X^{t,x,m},M^{t,m})$, $Z=v(t,X^{t,x,m},M^{t,m})$ hold true.           
\end{proof} 

\subsection{Regularity property of the solution}
\label{sub:Regul}

\noindent
With more assumptions on $F$ and $f$ we can even give regularity properties on the function $u$.

\begin{prop}
\label{prop:Diff} 
Assume that the assumptions (QG), (D1)-(D2) are in force. Then the function $u$ given in Theorem \ref{th:Markovprop} satisfies:
\begin{itemize}
\item[(i)] The map $ (x,m)\mapsto u(t,x,m) $ is of class $\mathcal{C}^{0,1}([0,T]\times (\real^n \times \real^d))$ for all $t\in [0,T]$. 
\item[(ii)] There exists two constants $\zeta_1$ and $\zeta_2$ depending only on $\|F\|_{\infty}$ and on the constants $a, b$ in Assumption (QG) such that $ \zeta_1 \leq u(t,x,m) \leq \zeta_2, \quad \forall (t,x,m) \in [0,T] \times \real^n \times \real^d.$
\end{itemize}
\end{prop}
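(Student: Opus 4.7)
The plan is to derive (ii) immediately from Theorem \ref{th:existence} and to obtain (i) by differentiating, in $(x,m)$, the forward process and the solution of the BSDE, exploiting the BMO property also provided by Theorem \ref{th:existence}. Item (ii) is in fact a direct corollary: the Markov representation $Y_s^{t,x,m} = u(s, X_s^{t,x,m}, M_s^{t,m})$ evaluated at $s=t$ gives $u(t,x,m) = Y_t^{t,x,m}$ since $X_t^{t,x,m} = x$ and $M_t^{t,m} = m$; and Theorem \ref{th:existence} produces a constant $\tilde{c} > 0$ depending only on $\|F\|_\infty$, $a$ and $b$ such that $|Y_t^{t,x,m}| \leq \tilde{c}$, independently of $(t,x,m)$.

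For (i), the first step is the classical regularity of the forward flow: under the standing $C^{0,1}$-assumptions on $(\sigma, b)$ with locally Lipschitz spatial derivatives, $(x,m) \mapsto (X^{t,x,m}, M^{t,m})$ is first-order differentiable in $\mathcal{S}^p$ for every $p \geq 1$, the derivatives $\nabla_x X, \nabla_m X$ solving linear SDEs with locally bounded coefficients (see e.g.\ \cite{Protter}); continuity of these derivatives in $(x,m)$ is standard. The second step is to identify the candidate derivatives $(\nabla Y, \nabla Z, \nabla N)$ of the BSDE solution as the unique solution to the formally linearized BSDE
\begin{align*}
\nabla Y_s &= \nabla F\cdot(\nabla X_T, \nabla M_T) + \int_s^T \bigl(\nabla_x f\,\nabla X_r + \nabla_m f\,\nabla M_r + \nabla_y f\,\nabla Y_r + \nabla_z f\,\nabla Z_r q_r^\ast\bigr)\,dC_r \\
&\quad - \int_s^T \nabla Z_r\,dM_r - \int_s^T d\nabla N_r,
\end{align*}
the coefficients being evaluated along $(r, X_r^{t,x,m}, M_r^{t,m}, Y_r^{t,x,m}, Z_r^{t,x,m} q_r^\ast)$. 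Assumption (D2), together with the uniform BMO bound on $\int_0^\cdot Z^{t,x,m}_s dM_s + N^{t,x,m}$ from Theorem \ref{th:existence}, ensures that $\int_0^\cdot \nabla_z f(\cdots)\,dM_s$ is itself a BMO-martingale; Kazamaki's criterion then yields an equivalent probability measure $\widetilde{\P}$ under which the linearized equation becomes a Lipschitz BSDE solvable via \cite{ElKarouiHuang}, the boundedness of $\nabla F$ from (D1) and the growth controls of (D2) supplying the required integrability of the data.

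The third and main step is to show that the difference quotients $h^{-1}(Y^{t,x+he_i,m} - Y^{t,x,m})$ (and their $m$-analogues) converge in $\mathcal{S}^2$ to $\nabla_{x_i} Y^{t,x,m}$ as $h \to 0$. The route is: write the difference of two BSDEs as a \emph{linear} BSDE whose coefficients arise by integrating the partial derivatives of $f$ along an interpolating path between the two initial conditions; invoke stability of BMO-BSDEs together with the local Lipschitz regularity of $\nabla_z f$ from (D2) to pass to the limit $h \to 0$; then evaluate at $s=t$ to identify $\nabla_x u(t,x,m) = \nabla_x Y_t^{t,x,m}$ and analogously for $m$. Continuity of $\nabla u$ in $(x,m)$ follows from stability of the linearized BSDE in its coefficients, and continuity of $u$ in $t$ from the path-continuity of $Y^{t,x,m}$ at $s=t$ together with dominated convergence. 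The main obstacle is precisely the control of the $\nabla_z f \cdot \nabla Z q^\ast$ term, whose linear-in-$z$ growth rules out any naive Gronwall argument; here the uniform BMO bound from Theorem \ref{th:existence} is essential, both to define the Girsanov measure and to secure convergence of the difference quotients uniformly in $(x,m)$ on compact sets.
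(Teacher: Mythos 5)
Your item (ii) matches the paper exactly: evaluate the Markov representation at $s=t$ and invoke the uniform bound from Theorem \ref{th:existence}. For item (i), however, you take a genuinely different and heavier route than the paper. You propose to redo the full differentiability machinery from scratch in the general (c\`adl\`ag orthogonal component) setting: linearize the BSDE satisfied by the candidate derivatives, use the BMO bound and Girsanov to make it Lipschitz, and prove convergence of difference quotients. The paper instead proves only the \emph{continuity} of $u$ directly, via the linear BSDE satisfied by $\delta Y = Y^{1}-Y^{2}$ with coefficients $A^X, A^M, A^Y, A^Z$ obtained by integrating $\nabla f$ along an interpolating path, controlled by the a priori estimates of Proposition \ref{prop:aprioriortho} (valid for c\`adl\`ag $N$). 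Once $u$ is continuous, $Y=u(\cdot,X,M)$ is continuous, hence so is $N$, and the situation then matches the continuous-filtration setting of \cite{IRR}, whose Propositions 4.5 and 4.7 are cited verbatim for the actual differentiability. The paper's shortcut buys exactly what your plan would have to rebuild: all the difference-quotient and stability arguments of \cite{IRR} are stated for a \emph{continuous} orthogonal component, and the whole point of this proposition's proof is to reduce to that case rather than to re-derive those arguments with a possibly discontinuous $N$.

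Two soft spots in your plan are worth flagging. First, your justification of continuity of $u$ in $t$ (``path-continuity of $Y^{t,x,m}$ at $s=t$'') presumes what is to be shown: a priori $Y$ is only c\`adl\`ag, since $\Delta Y = \Delta N$. The paper handles the term $\E[|Y^{1}_{t_1}-Y^{1}_{t_2}|^{2p}]$ by observing that $\E[|\Delta_{t_1}Y^{1}|^{2p}]=\E[|\Delta_{t_1}N^{1}|^{2p}]=0$ because $N^{1}$ is a martingale of the Markov filtration and so does not jump at the deterministic (hence predictable) time $t_1$; you need this or an equivalent argument. Second, if you do pursue the direct route, you must verify that the convergence of difference quotients and the stability of the linearized BSDE hold with a c\`adl\`ag $N$ throughout; the a priori estimates in the Appendix do cover this case, so the plan is salvageable, but it is substantially more work than the paper's reduction and you should not cite the \cite{IRR} results before establishing that their continuity hypothesis is met.
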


\begin{proof}
From the Markovian representation of $Y^{t,x,m}$ we have that $u(t,x,m)=Y_t^{t,x,m}$. Hence part (ii) follows from the fact that the solution process $Y^{t,x,m}$ of \eqref{BSDE} is bounded by Theorem \ref{th:existence}. We now focus on part (i) of the theorem and more precisely on the continuity in $(t,x,m)$ of the map $u$. Let $(t_1,x_1,m_1), (t_2,x_2,m_2)$ in $[0,T]\times \real^n \times \real^d$ with $t_1<t_2$.
For simplicity, we use the following notations: $(X^i,M^i):=(X^{t_i,x_i,m_i},M^{t_i,m_i})$, $(Y^{i},Z^{i},N^i):=(Y^{t_i,x_i,m_i},Z^{t_i,x_i,m_i},N^{t_i,x_i,m_i})$, $i=1,2$, $\de X:=X^1-X^2$, $\de M:=M^1-M^2$, $\de Y:=Y^{1}-Y^{2}$, $\de Z:=Z^{1}-Z^{2}$, $\de N:=N^{1}-N^{2}$ and $\zeta:=F(X_T^{1},M_T^{1})-F(X_T^{2,},M_T^{2})$. Let $s\in [t_1,T]$. We have
\begin{equation}
\label{eq:diffBSDE}
\delta Y_s = \zeta -\int_s^T \de Z_r dM_r + \int_s^T g(r,\de Y_r,\de Z_r q_r^*) dC_r -\int_s^T d(\de N_r),
\end{equation} 
with $g(r,\de Y_r,\de Z_r q_r^*):=\de Z_r q_r^* A^Z_r+  \de Y_r A^Y_r + \de
M_r^* A^M_r + \de X_r^* A^X_r $ and
\begin{align*}
A^Z_r&:=\int_0^1 \nabla_{z}f(r,X_r^1,M_r^1,Y_r^1,Z_r^2 q_r^* +\alpha (Z_r^2-Z_r^1) q_r^*) d\alpha\\
A^Y_r&:=\int_0^1 \nabla_{y}f(r,X_r^1,M_r^1,Y_r^2+\alpha (Y_r^2-Y_r^1),Z_r^2q_r^*) d\alpha\\
A^M_r&:=\int_0^1 \nabla_{m}f(r,X_r^1,M_r^2+\alpha (M_r^2-M_r^1),Y_r^2,Z_r^2 q_r^*) d\alpha\\
A^X_r&:=\int_0^1 \nabla_{x}f(r,X_r^2+\alpha (X_r^2-X_r^1),M_r^2,Y_r^2,Z_r^2 q_r^*) d\alpha.
\end{align*}
In addition, since $f$ satisfies (QG) and (D1)-(D2), the condition (AP) (defined in Section \ref{section:Appendix}) is true for $g$ and so the apriori estimates of Proposition \ref{prop:aprioriortho} can be applied. More precisely, we obtain for any $p>1$ 
\begin{align}
\label{eq:cont1}
&|u(t_1,x_1,m_1)-u(t_2,x_2,m_2)|^{2p}\nonumber\\
&=\E[|Y_{t_1}^1-Y^2_{t_2}|^{2p}]\nonumber\\
&\leq c \left(\E[|Y_{t_1}^1-Y_{t_2}^1|^{2p}] +\E\left[|Y^1_{t_2}-Y^2_{t_2}|^{2p}\right]\right)\nonumber\\
&\leq c \left(\E[|Y_{t_1}^1-Y_{t_2}^1|^{2p}] +\E\left[|\zeta|^{2pq}+\left(\int_{t_2}^T |\delta M_r^\ast A^M_r + \delta X_r^\ast A^X_r| dC_r\right)^{2pq}\right]^{1/q}\right)
\end{align}
where $q>1$ comes from the a priori estimates of Proposition \ref{prop:aprioriortho}. Lebesgue's dominated convergence theorem leads to $\lim_{t_1 \to t_2, t_1<t_2} \E[|Y_{t_1}^1-Y_{t_2}^1|^{2p}]=\E[|\Delta_{t_1}Y^1|^{2p}]=\E[|\Delta_{t_1}N^1|^{2p}]=0$ since $N^1$ is a martingale (so its jumps does not occur at predictable times). In a similar way we have that $\delta X$ satisfies an SDE on $[t_2,T]$ with initial value $x_1-x_2+X_{t_2}^1-X_{t_1}^1$ and thus classical a priori estimates for SDEs (see \textit{e.g.} \cite[Lemma V.1]{Protter}) lead to (for any $k\geq 1$) 
\begin{equation}
\label{eq:contfor1}
\E[\sup_{s\in [t_2,T]} (|\delta X_s|^{2k} + |\delta M_s|^{2k})] \leq c (|x_1-x_2|^2+|m_1-m_2|^2+\E[|X_{t_2}^1-X_{t_1}^1|^2]+\E[|M_{t_2}-M_{t_1}|^2])^k. 
\end{equation} 
Coming back to \eqref{eq:cont1} and starting with the integral term we get:
\begin{align*}
&\E\left[\left(\int_{t_2}^T |\delta M_r^\ast A^M_r + \delta X_r^\ast A^X_r| dC_r\right)^{2pq}\right]\\
&\leq c \E\left[\left(\int_{t_2}^T (|\delta M_r^\ast|+|\delta X_r^\ast|)^2 dC_r \int_{t_2}^T |Z_r^2 q_r^\ast|^2 dC_r\right)^{pq}\right]\\
&\leq c \E\left[\left(\int_{t_2}^T (|\delta M_r^\ast|+|\delta X_r^\ast|)^2 dC_r\right)^{2pq}\right]^{1/2} \E\left[\left(\int_{t_2}^T |Z_r^2 q_r^\ast|^2 dC_r\right)^{2pq}\right]^{1/2}\\
\end{align*}
where we have used the fact that $|\nabla_x f(s,x,m,y,z) + \nabla_m f(s,x,m,y,z)|\leq c (1+|z|) $ by assumption $(D2)$, and two times Cauchy-Schwarz inequality. Since $\tilde{M}:=\int_0^\cdot Z_r^2 dM_r$ is a BMO martingale it belongs to $\mathcal{H}^k$ for every real number $k\geq 1$ (\textit{i.e.}, $\E[[\tilde{M}]_T^{k/2}]^{1/k}$), hence the term $\E\left[\left(\int_{t_2}^T |Z_r^2 q_r^\ast|^2 dC_r\right)^{2pq}\right]^{1/2}<\infty$. As a consequence, the previous computations together with the estimates \eqref{eq:contfor1} yield
\begin{align*}
&\E\left[\left(\int_{t_2}^T |\delta M_r^\ast A^M_r + \delta X_r^\ast A^X_r| dC_r\right)^{2pq}\right]\leq c \E\left[\left(\int_{t_2}^T (|\delta M_r^\ast|+|\delta X_r^\ast|)^2 dC_r\right)^{2pq}\right]^{1/2}\\
&\leq c (|x_1-x_2|^2+|m_1-m_2|^2+\E[|X_{t_2}^1-X_{t_1}^1|^2]+\E[|M_{t_2}-M_{t_1}|^2])^{2pq}.
\end{align*}  
Lebesgue dominated convergence Theorem and the continuity of $X^1$ and $M$ imply that 
$$ \lim_{(t_1,x_1,m_1)\to(t_2,x_2,m_2) t_1<t_2} \E\left[\left(\int_{t_2}^T |\delta M_r^\ast A^M_r + \delta X_r^\ast A^X_r| dC_r\right)^{2pq}\right]=0.$$
Now it remains to deal with the term $\E[|\zeta|^{2pq}]$ in \eqref{eq:cont1}. To this end, we apply the Lipschitz assumption on $F$ to get
$$ \E[|\zeta|^{2pq}] \leq c \E[|X^1_T-X_T^2|^{2pq} + |M^1_T-M_T^2|^{2pq}] $$
and we conclude with \eqref{eq:contfor1}. Putting together these results we have proved that:
$$ \lim_{(t_1,x_1,m_1)\to(t_2,x_2,m_2) t_1<t_2} |u(t_1,x_1,m_1)-u(t_2,x_2,m_2)|^{2p}=0.$$ 
Obviously the case $t_1>t_2$ can be treated similarly.\\\\
We have proved that the function $u$ is continuous. In addition we know that $Y^{t,x,m}=u(t,X^{t,x,m},M^{t,m})$, hence $Y$ is continuous. As a consequence the orthogonal component $N$ is also continuous. So our situation now perfectly matches the setting of \cite{IRR}, we can directly apply their result to get that for every $t \in [0,T]$, the map $(x,m) \mapsto u(t,x,m)$ is continuously differentiable (we refer to \cite[Propositions 4.5 and 4.7]{IRR}). Note that incidently some typos can be found in the proof of \cite[Proposition 4.7]{IRR} where a bound on quantities of the form $\E[\sup_{s\in [t_2,T]} (|X_s^{t_1,x_1,m_1}-X_s^{t_2,x_2,m_2}|^{2k} + |M_s^{t_1,m_1}-M_s^{t_2,m_2}|^{2k})]$ are given only in terms of the difference between $m_1-m_2$ whereas they should be given as in \eqref{eq:contfor1}. Anyway this quantity tends to zero as $(t_1,x_1,m_1)$ goes to $(t_2,x_2,m_2)$ which is enough for our purpose. Note also that as in \cite[Proposition 4.5]{IRR} we can prove that $(\partial_x Y^{t,x,m},\partial_x Z^{t,x,m},\partial_x N^{t,x,m})$ satisfies a BSDE.   
\end{proof}

\subsection{Representation without orthogonal component}
\label{section:without}

We can now state and prove the main result of this section.

\begin{theorem}
\label{theorem:main}
Assume that $f$ satisfies the assumptions (QG) and (D2). Let $F$ be bounded Borelian map.
Then $N^{t,x,m}$ in \eqref{BSDE} is equal to zero and equation \eqref{BSDE} becomes
$$ Y_s^{t,x,m}=F(X_T^{t,x,m})-\int_s^T Z_r^{t,x,m} dM_r + \int_s^T f(r,Y_r^{t,x,m},Z_r^{t,x,m} q_r^\ast) dC_r.$$
\end{theorem}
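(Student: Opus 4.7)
The plan is a reduction to the smooth case followed by a mollification argument, mirroring Step 2 of the proof of Theorem \ref{th:main}. Concretely, I will first establish the conclusion under the additional assumption (D1) on $F$, using Proposition \ref{prop:Diff} to get a regular Feynman-Kac representation and then invoking the key Lemma \ref{lemma:rep1}. I will then regularize a general bounded Borelian $F$ by convolution with a Gaussian kernel and push the equality through to the limit by means of the a priori estimates.

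\textbf{Smooth case.} Assume in addition to boundedness that $F$ satisfies (D1). Proposition \ref{prop:Diff} then produces a bounded function $u:[0,T]\times \real^n \times \real^d \to \real$ such that $(x,m) \mapsto u(t,x,m)$ is continuously differentiable with derivative jointly continuous in $(t,x,m)$, and $Y_s^{t,x,m} = u(s, X_s^{t,x,m}, M_s^{t,m})$ for every $s\in [t,T]$. Set $L_s := (X_s^{t,x,m}, M_s^{t,m})$, a continuous square-integrable semimartingale in $\real^{n+d}$. The BSDE \eqref{BSDE} rewrites as
$$ Y_s = Y_t + \int_t^s (0, Z_r)\, dL_r + (N_s^{t,x,m} - N_t^{t,x,m}) - \int_t^s f(r, X_r^{t,x,m}, M_r^{t,m}, Y_r, Z_r q_r^\ast)\, dC_r, $$
the drift being continuous and of finite variation. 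Since $[M^i, N^{t,x,m}]\equiv 0$ for every $i$ and the continuous martingale part of $X^{t,x,m}$ is $\int \sigma\, dM$, one has $[L^j, N^{t,x,m}]\equiv 0$ for each component of $L$. The hypotheses of Lemma \ref{lemma:rep1} are therefore met (with $u$ Borelian bounded, $C^1$ in space, derivative continuous in $(t,L)$), and the Lemma forces $N^{t,x,m} \equiv 0$.

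\textbf{Approximation.} For a general bounded Borelian $F$, set $F_\varepsilon := F \ast \phi_\varepsilon$ as in \eqref{eq:Fe}; exactly as in Lemma \ref{lemma:Markovprop}, $F_\varepsilon$ is bounded by $\|F\|_\infty$ and has bounded derivatives of every order, so $F_\varepsilon$ satisfies (D1). Let $(Y^\varepsilon, Z^\varepsilon, N^\varepsilon)$ be the solution of \eqref{BSDE} with terminal condition $F_\varepsilon(X_T^{t,x,m}, M_T^{t,m})$ provided by Theorem \ref{th:existence}; the smooth case just treated gives $N^\varepsilon \equiv 0$. Writing $(Y - Y^\varepsilon, Z - Z^\varepsilon, N - N^\varepsilon)$ as solution of the linearized BSDE (as in the proof of Proposition \ref{prop:Diff}), whose linearized coefficients inherit a BMO bound uniform in $\varepsilon$ from Theorem \ref{th:existence}, the a priori estimates of Proposition \ref{prop:aprioriortho} yield
$$ \E\bigl[[N - N^\varepsilon]_T^{p/2}\bigr] \leq c\, \E\bigl[|F(X_T^{t,x,m}, M_T^{t,m}) - F_\varepsilon(X_T^{t,x,m}, M_T^{t,m})|^{pq}\bigr]^{1/q} $$
for some $p, q > 1$. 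Since $F_\varepsilon$ and $F$ are uniformly bounded and $F_\varepsilon \to F$ almost everywhere, Lebesgue's dominated convergence sends the right-hand side to zero, as in Step 1 of the proof of Theorem \ref{th:main}, so $N^{t,x,m} \equiv 0$ and the driver term can be absorbed into the representation displayed in the statement.

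\textbf{Main obstacle.} The delicate step is the stability argument for the orthogonal component in the quadratic setting: one must linearize a driver whose $z$-gradient grows linearly in $|z|$ and then neutralize the resulting stochastic integral by a Girsanov change of probability, which requires that the martingale part of the reference solution be BMO uniformly in $\varepsilon$. This uniform BMO control is precisely what Theorem \ref{th:existence} furnishes, and it is the reason why the existence result of Section \ref{section:existence} was a prerequisite for the present theorem. Once this stability is in hand, the smooth case itself is a direct corollary of Proposition \ref{prop:Diff} coupled with Lemma \ref{lemma:rep1}, and no further computation is necessary.
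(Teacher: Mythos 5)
Your proposal is correct and follows essentially the same route as the paper: mollify $F$ to obtain a terminal condition satisfying (D1), use Theorem \ref{th:Markovprop} and Proposition \ref{prop:Diff} together with Lemma \ref{lemma:rep1} applied to $L=(X^{t,x,m},M^{t,m})$ to kill $N^\varepsilon$, then linearize the difference BSDE and invoke the a priori estimates of Proposition \ref{prop:aprioriortho} (justified via (D2) and the BMO property from Theorem \ref{th:existence}) to pass to the limit by dominated convergence. The only cosmetic difference is that you estimate $[N-N^\varepsilon]_T$ rather than $[N^{t,x,m}]_T$ directly, which is the same quantity since $N^\varepsilon\equiv 0$.
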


\begin{proof}
We proceed again by approximation. For every $\varepsilon$ in $(0,1)$ we define $F_\e:=F \ast \Phi_\e$ where $\Phi_\e$ is defined by \eqref{eq:Fe}. Hence, $F_\e$ satisfies assumption (D1) (we refer to the proof of Lemma \ref{lemma:Markovprop} where we have proved that for every bounded function $F$, the associated $F_\e$ functions are Lipschitz with Lipschitz derivative). Now we set $(Y^\e,Z^\e,N^\e)$ the solution of the BSDE \eqref{BSDE} where $F$ is replaced by $F_\e$, meaning:
$$ Y_t^\e=F^\e(X_T^{t,x,m},M_T^{t,m}) + \int_t^T f(s,X_s^{t,x,m},M_s^{t,m},Y_s^\e,Z_s^\e q_s^\ast) dC_s -\int_t^T Z_s^\e dM_s -\int_t^T dN_s^\e.$$
By Theorem \ref{th:Markovprop} and Proposition \ref{prop:Diff}, there exists a deterministic function $u_\e:0,T] \times \real^n\times \real^d \to \real$ in $C^{0,1}([0,T]\times (\real^n \times \real^d))$ such that $ Y=u_\e(\cdot,X,M).$ Hence by Lemma \ref{lemma:rep1}, $N^\e\equiv 0$. Let $\de Y:=Y^{t,x,m}-Y^\e$, $\de Z:=Z^{t,x,m}-Z^\e$, $\zeta^\e:=F(X_T^{t,x,m},M_T^{t,m})-F^\e(X_T^{t,x,m},M_T^{t,m})$ we have that $(\de Y, \de Z, N^{t,x,m})$ is solution to 
\begin{align*}
\de Y_s= \zeta^\e +\int_s^T g(r,\de Y_r, \de Z_r q_r^\ast, N_r) dC_r -\int_t^T \de Z_r dM_r - \int_t^T dN_r^{t,x,m}, \quad s\in [t,T]
\end{align*}  
with $g(r,\de Y_r,\de Z_r q_r^\ast):=\de Z_r q_r^\ast A_r^Z+ A_r^Y \de Y_r$ and $A_r^Y$, $A_r^Z$ defined in a similar way than in the proof of Proposition \ref{prop:Diff}.
By (D2), the generator $g$ satisfies (AP) of Section \ref{section:Appendix} and so a priori estimates of Proposition \ref{prop:aprioriortho} allows us to write for every $p>1$
$$\E\L[\sup_{s \in [t,T]} \vert \de Y_s\vert^{2 p}\R] + \E\left[ \left(\int_t^T \vert \de Z_r q_r^\ast \vert^2 dC_r \right)^p\right] + \E\left[[N^{t,x,m}]_T^{p}\right]\leq c \E\left[ \vert \zeta^\e \vert^{2 pq} \right]^{\frac{1}{q}}$$
where $q\geq 1$ is given by Proposition \ref{prop:aprioriortho}. Since $F$ is bounded, $F^\e$ is also bounded and Lebesgue dominated convergence Theorem leads to $\lim_{\varepsilon \to 0} \E[\vert \zeta^\e \vert^{2 pq}]=0$. Hence $N^{t,x,m}\equiv 0$. 
\end{proof}

\begin{remark}
We restrict our result to the case where the function $F$ in terminal condition is bounded because existence results for quadratic growth BSDEs are essentially valid under this hypothesis. To be more precise it is still possible to prove existence of solutions of BSDEs whose terminal condition admits finite exponential moments and we refer to \cite{BriandHu1}. However, this result has been proved in the Brownian framework and differentiability results have not been given up to our knowledge. Finally, if the driver $f$ is uniformly Lipschitz in $(y,z)$ then Theorem \ref{theorem:main} is valid for any Borelian map $F$ such that $F(X_T^{t,x,m},M_T^{t,m})$ is square integrable (the proof is similar to the Step 2 of the proof of Theorem \ref{th:main}).   
\end{remark}

\begin{remark}
Note that such an exact representation allows for numerical schemes for the solution of qgBSDEs driven by a general continuous Markov process. Indeed, usual such schemes are attainable only in a Markovian framework and it would be very difficult to simulate an orthogonal component whose structure is completely unknown. Our result then opens the way to the study of schemes for qgBSDEs driven by continuous Markov processes other than the standard Brownian motion.             
\end{remark}

\begin{remark}
Another application of Theorem \ref{theorem:main} is that we can obtain so-called representation formulas for the solutions of BSDEs \eqref{BSDE} with its application to cross-hedging in Finance without the assumption called (MRP) in \cite{IRR} which was crucial in \cite{IRR}.               
\end{remark}

\section{Appendix: a priori estimates and comparison theorem}
\label{section:Appendix}

We collect in this Appendix first a comparison theorem for Lipschitz growth BSDEs and a priori estimates. Here the main point is that the stochastic process $M$ is a continuous $d$-dimensional martingale with respect to a right-continuous complete filtration $(\mathcal{F}_t)_{t\in [0,T]}$. Since the later is not assumed to be continuous, the orthogonal component $N$ solution to the BSDE as above is just a square integrable c\`adl\`ag martingale. In this Appendix, $M$ is not assumed to be a Markov process. We start with a comparison theorem for the Lipschitz case whose proof basically follows the usual setting (see \textit{e.g.} \cite[Theorem 2.2]{ElKarouiPengQuenez} or \cite[Chapter 1 Theorem 6.1]{MaYong}). We nevertheless give a proof for making the paper self-contained.

\begin{lemma}
\label{lemma:comp}
Let $\zeta_1, \; \zeta_2$ in $L^2(\mathcal{F}_T)$, $f_1, f_2:[0,T] \times \Omega\times \real\times\real^d \to real$ two functions and $K_1, \;K_2 >0$ two positive constants satisfying
$$ |f^i(t,y,z)-f^i(t,y',z')| \leq K_i, \quad \forall (t,y,y',z,z')\in [0,T]\times \real^2\times (\real^d)^2,. \; i=1,2.$$
Let $(Y^i,Z^i,N^i)\in \mathcal{S}^2\times \mathcal{H}^2 \times \mathcal{O}^2$ be the solution to the BSDE:
$$ Y_t^i=\zeta_i + \int_t^T f_i(s,Y_s^i,Z_s^i q_s^\ast) dC_s - \int_t^T Z_s^i dM_s -\int_t^T dN_s^i, \quad t\in [0,T], \quad i=1,2. $$
If $\zeta_1\geq \zeta_2, \; \P-a.s.$ and $f_1(t,Y_t^2,Z_t^2 q_t^\ast) \geq f_2(t,Y_t^2,Z_t^2 q_t^\ast), \; d\P\otimes dC_t-a.s.$, then $Y^1 \geq Y^2, \; d\P\otimes dC_t-a.s.$. 
\end{lemma}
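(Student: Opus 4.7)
The plan is the classical subtraction-and-linearization strategy for BSDE comparison, adapted to handle the possibly discontinuous orthogonal martingale $\bar N:=N^1-N^2$ via the It\^o--Meyer formula applied to $g(y):=(y^-)^2$ and an exponential multiplier to close the Gronwall-type estimate in one shot. Set $\bar Y:=Y^1-Y^2$, $\bar Z:=Z^1-Z^2$, $\bar N:=N^1-N^2$, and $\bar\zeta:=\zeta_1-\zeta_2\geq 0$. Subtracting the two BSDEs gives
\[
\bar Y_t=\bar\zeta+\int_t^T \Delta f_s\,dC_s-\int_t^T \bar Z_s\,dM_s-\int_t^T d\bar N_s,
\]
with $\Delta f_s:=f_1(s,Y^1_s,Z^1_s q_s^\ast)-f_2(s,Y^2_s,Z^2_s q_s^\ast)$. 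Telescoping $\Delta f_s$ through $f_1(s,Y^2_s,Z^1_s q_s^\ast)$ and $f_1(s,Y^2_s,Z^2_s q_s^\ast)$ and using the Lipschitz hypothesis on $f_1$ yields bounded predictable processes $\alpha_s\in\real$ and $\beta_s\in\real^d$ (both bounded by $K_1$) together with the nonneg residual $h_s:=f_1(s,Y^2_s,Z^2_s q_s^\ast)-f_2(s,Y^2_s,Z^2_s q_s^\ast)\geq 0$ such that
\[
\Delta f_s=\alpha_s \bar Y_s+\beta_s\cdot(\bar Z_s q_s^\ast)+h_s.
\]

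Next I would apply Meyer's generalization of It\^o's formula to the convex $C^1$ function $g$ (with $g'(y)=-2y^-$ and $g''(y)=2\mathbf{1}_{y\leq 0}$ distributionally) at the c\`adl\`ag semimartingale $\bar Y$. The formula produces an increasing process $K$ collecting $\int_0^\cdot \mathbf{1}_{\bar Y_{s-}\leq 0}\,d[\bar Y^c]_s$ together with the nonneg jump residuals coming from the convexity of $g$. Since $M$ is continuous and $\bar N$ is strongly orthogonal to $M$, one has $d[\bar Y^c]_s=|\bar Z_s q_s^\ast|^2\,dC_s+d[\bar N^c]_s$, hence in particular $dK_s\geq \mathbf{1}_{\bar Y_{s-}\leq 0}|\bar Z_s q_s^\ast|^2\,dC_s$. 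Substituting the linearization and using the identity $(\bar Y_{s-})^-\bar Y_{s-}=-((\bar Y_{s-})^-)^2$ together with Young's inequality $2K_1(\bar Y_{s-})^-|\bar Z_s q_s^\ast|\leq K_1^2((\bar Y_{s-})^-)^2+|\bar Z_s q_s^\ast|^2\mathbf{1}_{\bar Y_{s-}\leq 0}$ shows that the cross term involving $\bar Z$ is absorbed by the corresponding piece of $K$, while the $h$-contribution has the right sign since $-2(\bar Y_{s-})^- h_s\leq 0$.

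Finally, I would apply It\^o's product rule to $e^{\lambda C_t}(\bar Y_t^-)^2$ with $\lambda:=2K_1+K_1^2$: since $C$ is continuous and of finite variation there is no It\^o correction, and the exponential multiplier cancels exactly the remaining $(2K_1+K_1^2)((\bar Y_{s-})^-)^2\,dC_s$ drift from the $\alpha$ and cross terms. After localizing along stopping times reducing $\int(\bar Y_{s-})^-\bar Z_s\,dM_s$ and $\int(\bar Y_{s-})^-\,d\bar N_s$ to true martingales and taking conditional expectations given $\mathcal{F}_t$, one is left with $e^{\lambda C_t}(\bar Y_t^-)^2\leq 0$, whence $\bar Y_t\geq 0$ $\P$-a.s. for every $t\in[0,T]$, which is the desired comparison. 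The step I expect to be most delicate is the correct bookkeeping of the jumps of $\bar N$ in the It\^o--Meyer expansion: unlike in the continuous-filtration case treated in \cite{Morlais}, the orthogonal part need not be continuous and the classical $C^2$ It\^o formula does not apply; the convexity of $g$ is what forces the jump residuals into $K$ with the correct sign so that they can simply be discarded.
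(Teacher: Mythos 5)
Your proof is correct, but it follows a genuinely different route from the one in the paper. The paper also linearizes the difference of the two equations, but then treats the resulting linear BSDE by an explicit representation: it performs a Girsanov change of measure $\frac{d\Q}{d\P}=\mathcal{E}(\int_0^\cdot H_s\,dM_s)_T$ built from the $z$-increment quotient $H$ (a true martingale since $|Hq|$ is bounded and $C$ is bounded), under which $N^1-N^2$ remains a martingale by strong orthogonality to the continuous $M$, and then uses the integrating factor $\Gamma_t=e^{\int_0^t J_s\,dC_s}$ to obtain $\Gamma_t\,\delta Y_t=\E^{\Q}[\Gamma_T\,\delta\zeta+\int_t^T\Gamma_s\,\delta f_s\,dC_s\,\vert\,\mathcal{F}_t]\geq 0$. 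You instead avoid any change of measure and run the Meyer--It\^o/Tanaka argument on $(y^-)^2$ with the multiplier $e^{\lambda C_t}$, absorbing the $\bar Z$ cross term into the occupation-density part of the bracket and discarding the jump residuals of $\bar N$ by convexity. Both arguments are sound here; the paper's version has the advantage of producing the explicit conditional-expectation formula for the linear BSDE, which is reused verbatim in the duality argument of Step 1 of Theorem \ref{th:existence}, while your version is more elementary (no reverse H\"older or true-martingale verification for the stochastic exponential) and makes the handling of the possibly discontinuous orthogonal part completely transparent. Two small bookkeeping points you should make explicit if you write this up: in the $dC$-drift you may freely replace $\bar Y_{s-}$ by $\bar Y_s$ because $\bar Y$ has at most countably many jumps and $dC$ is atomless; and the occupation-time formula shows the set $\{\bar Y_s=0\}$ contributes nothing to $\int g''(\bar Y_s)\,d[\bar Y^c]_s$, so the choice between $\mathbf{1}_{y<0}$ and $\mathbf{1}_{y\leq 0}$ in $g''$ is immaterial.
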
               
\begin{proof}
Let $\delta Y:=Y^1-Y^2$, $\delta Z:=Z^1-Z^2$, $\delta N:=N^1-N^2$, $\delta \zeta:=\zeta^1-\zeta^2$ and $\delta f:=f^1(\cdot,Y^2,Z^2 q^\ast)-f^2(\cdot,Y^2,Z^2 q^\ast)$. We set: 
\begin{align*}
J_s&=\begin{cases}
     \frac{f^1(s,Y_s^1,Z_s^2 q_s^*)-f^1(s,Y_s^2,Z_s^2 q_s^*)}{Y_s}, & \mbox{if } \de Y_s \neq 0,  \\ 0, & \mbox{otherwise, and}
    \end{cases}
\\
H_s&=\begin{cases}
     \frac{f^{1}(s,Y_s^1,Z_s^1 q_s^*)-f^1(s,Y_s^1,q_s^2 q_s^\ast)}{|Z_s q_s^\ast|^2} \de Z_s, & \mbox{if } |\de Z_s q_s^\ast|^2 \neq 0,  \\ 0, & \mbox{otherwise.}
    \end{cases}
\end{align*}
The triple $(\delta Y, \de Z, \de N)$ is solution to the following linear BSDE 
$$ \de Y_t =\de \zeta +\int_t^T (J_s \de Y_s + \de Z_s q_s^\ast (H_s q_s^\ast)^\ast + \de f_s) dC_s -\int_t^T \de Z_s dM_s -\int_t^T \de N_s, \quad t\in [0,T].$$
This expression rewrites as
$$ \de Y_t =\de \zeta +\int_t^T (J_s \de Y_s + \de f_s) dC_s -\int_t^T \de Z_s \underbrace{(dM_s-q_s^\ast q_s H_s dC_s)}_{=:dM^H}  -\int_t^T \de N_s, \quad t\in [0,T].$$
Letting $L^H:=\mathcal{E}(\int_0^\cdot H_s dM_s)$. Since $|H q|$ is bounded, $L^H$ is a true martingale, so are $M^H$ and $N$ under $\Q$ with $\frac{d\Q}{d\P}:=L^H_T$. 
Let $\Gamma(t):=e^{\int_0^t J_s dC_s}$. Using integration by parts formula and taking conditional expectation with respect to $\Q$ in the expression above yield $\Gamma(t) \de Y_t =  \E^{\Q}\left[\Gamma_T \de Y_T +\int_t^T \Gamma_s \de f_s dC_s \Big\vert \mathcal{F}_t \right], \; \forall t\in [0,T]$
which concludes the proof. 
\end{proof}

We turn out to apriori estimates for stochastic Lipschitz BSDEs of the form \eqref{eq:BSDEexistence}
where the driver $f$ satisfies the assumption below:
\begin{itemize}
\item[\textbf{(AP):}] There exist a $\real^{1 \times d}$-valued predictable process $K$ and a constant $\alpha \in (0,1)$ such that $K \cdot M$ is a BMO martingale satisfying $d\P\otimes dC$-a.e.
$$(y_1-y_2)(f(s,y_1,z)-f(s,y_2,z)) \leq \vert q_s K_s^* \vert^{2\alpha}|y_1-y_2|^2, \forall (y_i,z) \in \real\times\real^d, \; i=1,2,$$
$$\textrm{ and }|f(s,y,z_1)-f(s,y,z_2)| \leq \vert q_s K_s^* \vert |z_1-z_2|, \forall (y,z_i) \in \real\times\real^d, \; i=1,2.$$
\end{itemize}

\begin{prop}
\label{prop:aprioriortho}
Let $(Y,Z,N)$ be a solution to \eqref{eq:BSDEexistence} with driver satisfying condition (AP) and $\zeta$ is a bounded random variable (so here $N$ is a priori only c\`adl\`ag).
We assume that for every $\beta \geq 1$ we have $\int_0^T \vert f(s,0,0) \vert dC_s \in L^\beta(\P)$.
Let $p>1$, then there exist constants $q \in (1, \infty)$, $c>0$ depending only on $T$, $p$ and on the BMO-norm of $K \cdot M$ such that
\begin{align*}
&\E\L[\sup_{t \in [0,T]} \vert Y_t\vert^{2 p}\R] + \E\left[ \left(\int_0^T \vert q_s Z_s^* \vert^2 dC_s \right)^p\right] + \E\left[ [ N]_T^{p}\right]
\\
&\leq c \E\left[\vert \zeta \vert^{2 pq} + \left( \int_0^T \vert
f(s,0,0) \vert dC_s  \right)^{2pq}\right]^{\frac{1}{q}}.
\end{align*}
\end{prop}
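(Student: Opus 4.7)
My plan is to combine the standard energy estimate for BSDEs (Itô's formula applied to $|Y|^2$) with a linearization of $f$ coming from (AP), and then exploit the BMO property of $K \cdot M$ through a reverse Hölder inequality. The latter is what produces the exponent $q>1$ on the right-hand side of the estimate.

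First, apply Itô's formula to $|Y|^2$. Using \eqref{eq:BSDEexistence} together with the strong orthogonality $[M^i,N]=0$ and $[\int Z\,dM]_t=\int_0^t |Z_s q_s^\ast|^2 dC_s$, I obtain
$$|Y_t|^2+\int_t^T |Z_s q_s^\ast|^2 dC_s + [N]_T-[N]_t = |\zeta|^2 + 2\int_t^T Y_s f(s,Y_s,Z_s)\,dC_s - 2\int_t^T Y_s Z_s\,dM_s - 2\int_t^T Y_{s-}\,dN_s.$$
Next, linearize $f$ in the spirit of the proof of Lemma \ref{lemma:comp}. Set
$$J_s:=\mathbf{1}_{\{Y_s\neq 0\}}\tfrac{f(s,Y_s,Z_s)-f(s,0,Z_s)}{Y_s},\qquad H_s:=\mathbf{1}_{\{|Z_s q_s^\ast|>0\}}\tfrac{f(s,0,Z_s)-f(s,0,0)}{|Z_s q_s^\ast|^2}\,(Z_s q_s^\ast),$$
so that $f(s,Y_s,Z_s)-f(s,0,0)=J_sY_s+(Z_s q_s^\ast)(H_s q_s^\ast)^\ast$. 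The two inequalities in (AP) give $J_s\leq |q_s K_s^\ast|^{2\alpha}$ and $|H_s q_s^\ast|\leq |q_s K_s^\ast|$.

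Plug this decomposition into the energy identity. Young's inequality $2|Y_s||q_sK_s^\ast||Z_sq_s^\ast|\leq \tfrac12 |Z_sq_s^\ast|^2 + 2|Y_s|^2|q_sK_s^\ast|^2$ lets me absorb half of the $|Zq^\ast|^2$ term into the left-hand side. What remains is an inequality of the form
$$|Y_t|^2 + \tfrac12\int_t^T |Z_s q_s^\ast|^2 dC_s + [N]_T-[N]_t \leq |\zeta|^2 + 2\int_t^T \Lambda_s |Y_s|^2 dC_s + 2\int_t^T |Y_s||f(s,0,0)|\,dC_s + \text{martingales},$$
where $\Lambda_s:=|q_sK_s^\ast|^{2\alpha}+|q_sK_s^\ast|^2$. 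Applying Itô to $e^{2\int_0^t \Lambda_s dC_s}|Y_t|^2$ kills the $\Lambda_s|Y_s|^2$ term, leaving an inequality in which the $Y$-drift has disappeared and only the inhomogeneous term $|Y_s||f(s,0,0)|$ and a martingale part remain.

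From this point, take $\sup_{t\in[0,T]}$, raise to power $p$, take expectations, and control the martingales by the Burkholder–Davis–Gundy inequality. A second application of Young (with the factor $\sup_s|Y_s|^p$ absorbed on the left) closes the estimate for the $\sup$-norm; the $|Zq^\ast|^2$ and $[N]$ estimates come from the same Itô identity once $\sup|Y|^{2p}$ is bounded. The one non-routine ingredient is to convert the bound, which a priori involves $\exp\!\bigl(2p\int_0^T\Lambda_s dC_s\bigr)$ and $|\zeta|^2 + (\int_0^T|f(s,0,0)|dC_s)^2$, into a product bound via Hölder with some exponent $q>1$. Here the BMO hypothesis on $K\cdot M$ is essential: John–Nirenberg (equivalently, reverse Hölder for $\mathcal{E}(K\cdot M)$) yields $\E[\exp(\lambda \int_0^T |q_sK_s^\ast|^2 dC_s)]<\infty$ for some $\lambda>0$ depending only on $\|K\cdot M\|_{\mathrm{BMO}}$ and, together with the fact that $\alpha<1$ and $|q K^\ast|^{2\alpha}\leq 1+|qK^\ast|^2$, this gives every $L^r$-moment of $e^{2p\int_0^T \Lambda_s dC_s}$. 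Picking $q'$ so that this exponential lies in $L^{q'}$ produces the stated exponent $q=q'/(q'-1)$.

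The main obstacle is bookkeeping rather than a single hard step: one must handle the càdlàg martingale $N$ carefully in Itô's formula (using $Y_{s-}$ and the fact that the jumps of $|Y|^2$ only contribute positive quantities to the left-hand side, so they can be dropped), and one must pick the exponent coming from John–Nirenberg compatibly with the BDG estimates so that everything closes uniformly in $p$. The structure is otherwise a direct extension of the classical Brownian a priori estimate for stochastic-Lipschitz drivers of BMO type, adapted to the orthogonal component $N$.
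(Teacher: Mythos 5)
There is a genuine gap at the point where you absorb the $z$-linearization by Young's inequality and then claim integrability of the resulting exponential weight. After writing $f(s,Y_s,Z_s)-f(s,0,0)=J_sY_s+(Z_sq_s^\ast)(H_sq_s^\ast)^\ast$ and bounding $2|Y_s||q_sK_s^\ast||Z_sq_s^\ast|\leq \tfrac12|Z_sq_s^\ast|^2+2|Y_s|^2|q_sK_s^\ast|^2$, your Gronwall weight becomes $\exp\bigl(2\int_0^t(|q_sK_s^\ast|^{2\alpha}+|q_sK_s^\ast|^2)\,dC_s\bigr)$, which contains the \emph{full} square $|q_sK_s^\ast|^2$. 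The John--Nirenberg inequality for the BMO martingale $K\cdot M$ only gives $\E[\exp(\lambda\int_0^T|q_sK_s^\ast|^2dC_s)]<\infty$ for $\lambda$ \emph{below a threshold} determined by the BMO norm; it does not give every $L^r$-moment of that exponential. Your sentence ``this gives every $L^r$-moment of $e^{2p\int_0^T\Lambda_s dC_s}$'' is therefore false: for $p$ (or the Hölder exponent) large relative to $\|K\cdot M\|_{\mathrm{BMO}}$ the required moment can be infinite, and the estimate cannot be closed for all $p>1$ as the statement demands. The trick of writing $|qK^\ast|^{2\alpha}\leq\varepsilon|qK^\ast|^2+c_\varepsilon$ rescues only the $2\alpha$-power term (because $\alpha<1$ lets you shrink $\varepsilon$ below the John--Nirenberg threshold for any desired moment); it cannot rescue a term that already carries the exponent $2$ with a fixed coefficient.

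This is precisely why the paper does \emph{not} use Young's inequality on the $z$-term. Instead it performs a Girsanov change of measure $d\Q=\mathcal E(H\cdot M)_T\,d\P$, under which $\Lambda:=\int_0^\cdot Z_sdM_s-\int_0^\cdot(q_sH_s^\ast)(q_sZ_s^\ast)^\ast dC_s+N$ is a local martingale, so the $z$-linearization disappears from the drift entirely. The exponential weight then only involves $|q_sK_s^\ast|^{2\alpha}$ with $\alpha<1$, hence lies in every $L^r(\P)$, and the exponent $q>1$ in the final estimate arises not from integrating the weight but from the reverse Hölder inequality for $\mathcal E(H\cdot M)$ (and for $\mathcal E(-H\cdot\hat M)$ under $\Q$) when passing the moment bounds back and forth between $\P$ and $\Q$. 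Your linearization, the Itô computation for $|Y|^2$ including the treatment of the jumps of $N$ through $[N]$, and the overall Gronwall architecture all match the paper; the missing idea is the change of measure, and without it the argument does not close.
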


\begin{Proof}
Here we sketch the main steps of the proof which basically follows the same lines that the proof of \cite[Lemma A.1]{IRR} and we will only indicate the main changes in the proof. We proceed in several steps.

\noindent In a first step we exploit properties of BMO-martingales.
Let
\begin{align*}
J_s&=\begin{cases}
     \frac{f(s,Y_s,Z_s q_s^*)-f(s,0,Z_s q_s^*)}{Y_s}, & \mbox{if } Y_s \neq 0,  \\ 0, & \mbox{otherwise, and}
    \end{cases}
\\
H_s&=\begin{cases}
     \frac{f(s,0,Z_s q_s^*)-f(s,0,0)}{|q_s Z_s^*|^2}Z_s, & \mbox{if } |q_s Z_s^*|^2 \neq 0,  \\ 0, & \mbox{otherwise.}
    \end{cases}
\end{align*}
Then BSDE \eqref{eq:BSDEexistence} has the form
\begin{equation*}
 Y_t=\zeta-\int_t^T Z_s dM_s +\int_t^T \left(J_s Y_s +(q_s H_s^*)(q_sZ_s^*)^*+
 f(s,0,0)\right)dC_s -\int_t^T dN_s,\quad t\in[0,T].
\end{equation*}
Due to (AP) we have $|q H^*| \leq |q K^*|$ and it follows that $H \cdot M$ is a BMO($\P$) martingale.
Furthermore we know from \cite[Theorem 3.1]{Kazamaki} that there exists a $\hat q >1$ such that the reverse H\"older inequality holds, i.e. there exists a constant $c>0$ such that $\mathcal E(H \cdot M)_t^{- \hat q} \E \left[ \mathcal E(H \cdot M)_T^{\hat q} | \mathcal F_t \right] \leq c$.
By \cite[Theorem 2.3]{Kazamaki} the measure $\Q$ defined by $d\Q=\mathcal E(H \cdot M)_T d\P$ is a probability measure.
Girsanov's theorem implies that $\Lambda:=\int_0^\cdot Z_s dM_s - \int_0^\cdot(q_s H_s^*)(q_sZ_s^*)^* dC_s + N$
is a local $\Q$-martingale. Note that the difference with the same expression in proof of \cite[Lemma A.1]{IRR} is that $N$ is in the definition of $\Lambda$ but not in the definition of $\Q$. To be more precise, $\Q$ takes into account the quadratic growth of the driver which is only in $Z$ so there is no need to introduce $N$ in the measure changes. However, due to the presence of $N$ now in the equation, we need to introduce $N$ inside the quantity $\Lambda$.
As in the initial proof, there exists an increasing sequence of stopping times $(\tau^n)_{n\in\mathbb{N}}$ converging to $T$ such that $\Lambda_{\cdot \wedge \tau^n}$ is a $\Q$-martingale for any $n\in\mathbb{N}$.
Letting $e_t=\exp(2\int_0^t |q_s K_s^*|^{2 \alpha} dC_s)$, $t \in [0,T]$, with It\^o's formula applied to $e_t Y_t^2$ we have
\begin{align*}
d (e_t |Y_t|^2) &= 2 |q_t K_t^*|^{2 \alpha}  e_{t} Y_{t}^2 dC_t + 2 e_t Y_{t-} dY_t + e_t |q_t Z_t^*|^2 dC_t + e_t d[ N^c ]_t + e_t d[ N ]_t \\
& = 2 e_t |Y_t|^2 (|q_tK_t^*|^{2 \alpha}-J_t) dC_t + 2 e_t Y_{t-} d \Lambda_t - 2 e_t Y_t f(t,0,0) dC_t+ e_t |q_t Z_t^*|^2 dC_t + e_t d[ N ]_t,
\end{align*}
since $C$ is a continuous process.
With the inequality $J_t \leq |q_t K_t^*|^{2 \alpha}$, $t \in [0,T]$, which follows from assumption (AP) we know for $t\in[0,\tau^n]$
\begin{align*}
 e_t |Y_t|^2
\leq
& e_{\tau^n} |Y_{\tau^n}|^2
- \int_t^{\tau^n} 2 e_s Y_s d \Lambda_s
+ \int_t^{\tau^n} 2 e_s Y_s f(s,0,0) dC_s
- \int_t^{\tau^n} e_s |q_s Z_s^*|^2 dC_s - \int_t^{\tau^n} e_s d[ N ]_s.
\end{align*}
Note that $e_t \geq 1$ for all $t \in [0,T]$ and hence
\begin{align*}
e_t |Y_t|^2 + \int_t^{\tau^n} |q_s Z_s^*|^2 dC_s + \int_t^{\tau^n} d[ N ]_s
\leq
& e_{\tau^n} |Y_{\tau^n}|^2
- \int_t^{\tau^n} 2 e_s Y_s d \Lambda_s
+ \int_t^{\tau^n} 2 e_s Y_s f(s,0,0) dC_s.
\end{align*}
\noindent
In a second step we provide an estimate for $Y$. 
We take the conditional expectation under the new measure $\Q$ in the previous inequality.
As in the proof of \cite[Lemma A.1]{IRR}, we have that the process $e$ belongs to $\mathcal S^p(\P)$ for all $p \geq 1$, $e_{\tau^n} Y_{\tau^n}^2 $, $e_T |\zeta|^2$ and $\int_0^{T} 2 e_t |Y_t| |f(t,0,0)| dC_t$ are in $\mathbb L^p(\Q)$ for all $p \geq 1$.
In the same way we get the integrability of $\int_0^{\tau^n} 2 e_s |Y_s| d \Lambda_s$.
Hence,
$$ e_t Y_t^2
\leq \E^\Q\left[ e_{\tau_n} Y_{\tau_n}^2 +\int_0^{T} 2 e_s |Y_s| |f(s,0,0)|dC_s | \mathcal F_t \right], \quad t \leq \tau_n.$$
Now we let $n$ tend to infinity
$$e_t Y_t^{2} \leq \E^\Q\left[ e_{T} |\zeta|^2 +\int_0^{T} 2 e_s |Y_s||f(s,0,0)|dC_s | \mathcal F_t \right],$$
where we may apply the dominated convergence theorem.
The Young inequality with a constant $c_1>0$ gives
\begin{align*}
 Y_t^{2}
& \leq
\E^\Q\left[e_{T}|\zeta|^2 + \frac{1}{c_1} \sup_{t \in [0,T]} |Y_t|^2 + c_1 e_T^2 (\int_0^{T} |f(s,0,0)|dC_s)^2 | \mathcal F_t \right]
\\
& \leq \E^\Q\left[ \frac{1}{c_1} \sup_{t \in [0,T]} |Y_t|^2 + e^2_{T} \Theta_T | \mathcal F_t \right],
\end{align*}
where we set $\Theta_T=|\zeta|^2 +2c_1 (\int_0^{T} |f(s,0,0)|dC_s)^2$ and we take into account that $e_s/e_t \leq e_T$ for all $s,t \in [0,T]$ and $e_T \leq e_T^2$.
Let $p>1$, then we have
\begin{align*}
\sup_{t \in [0,T]} |Y_t|^{2p}
& \leq
\sup_{t \in [0,T]} \E^{\Q} \left[ \frac{1}{c_1} \sup_{t \in [0,T]} |Y_t|^2 + e^2_{T} \Theta_T | \mathcal F_t \right]^{p}.
\end{align*}
We apply Doob's inequality to obtain
\begin{align*}
\E^{\Q} \left[ \sup_{t \in [0,T]} |Y_t|^{2p} \right]
& \leq c \E^{\Q} \left[ \left( \E \left[ \frac{1}{c_1} \sup_{t \in [0,T]} |Y_t|^2 + e^2_{T} \Theta_T | \mathcal F_T \right]\right)^{p} \right]
\\
& \leq c \E^{\Q} \left[\frac{1}{c_1^p} \sup_{t \in [0,T]} |Y_t|^{2p} + e_{T}^{2p} \Theta_T^p \right],
\end{align*}
and choosing $c_1$ such that $c/c_1^p <1$ we have $\E^{\Q} \left[ \sup_{t \in [0,T]} |Y_t|^{2p} \right] \leq c \E^{\Q} \left[e_{T}^{2p} \Theta_T^p \right]$.
In Step 3 we give an estimate on $Z$ and $N$ under the measure $\Q$.
For $p > 1$ we deduce from the computations above that
\begin{align*}
&\left( \int_0^{\tau^n} |q_s Z_s^*|^2 dC_s \right)^p + \left( \int_0^{\tau^n} d[ N ]_s \right)^p\\
\leq & c \left(
|e_{\tau^n} Y_{\tau^n}^2|^p
+ \left| \int_0^{\tau^n}  e_s Y_s d \Lambda_s \right|^p
+ \left(\int_0^{T}  2 e_s |Y_s| |f(s,0,0)| dC_s \right)^p \right).
\end{align*}
Since the right hand side is exactly the same than in the proof of \cite[Lemma A.1]{IRR} we get following the same lines that
\begin{align}
& \E^{\Q} \left[\left( \int_0^{T} |q_s Z_s^*|^2 dC_s \right)^p + \left( \int_0^{T} d[ N ] _s \right)^p\right] \leq
c \E^{\Q}  \left[ |\zeta|^{2prk} + \left(\int_0^{T}  |f(s,0,0)| dC_s \right)^{2prk} \right]^{\frac{1}{rk}}
\label{zestimate}
\end{align}
where $r,k\geq 1$ are exponent coming from H\"older inequalities. We utilize the H\"older inequality with $rk$ to the estimate obtained for the process $Y$ in Step 2 and hence have
\begin{align}
\label{yestimate2}
\E^{\Q} \left[ \sup_{t \in [0,T]} |Y_t|^{2p} \right]
& \leq
c \E^{\Q}  \left[ |\zeta|^{2prk} + \left(\int_0^{T}  |f(s,0,0)| dC_s \right)^{2prk} \right]^{\frac{1}{rk}}.
\end{align}
\noindent
In step 4 we finally want to take the expectation under the measure $\P$.
Let us define $\hat M_t=M_t - \int_0^t H_s d [ M,M ]_s$ and note that since $H \cdot M$ is a BMO($\P$) martingale the process $H \cdot \hat M$ and hence $-H \cdot \hat M$ are BMO($\Q$) martingales (see \cite[Theorem 3.3]{Kazamaki}).
Furthermore by \cite[Theorem 3.1]{Kazamaki} there exist a $w, w' >1$ such that $\mathcal E(H \cdot M)_T \in L^w(\P)$ and $\mathcal E(- H \cdot \hat M)_T \in L^{w'}(\Q)$.
As $\mathcal{E}(H \cdot M)^{-1}=\mathcal E (-H \cdot \hat M)$ we have $d\P=\mathcal E (-H \cdot \hat M)_T d\Q$.
Now using the H\"older inequality with the conjugate exponent $v$ of $w$ (and $v'$ of $w$') and estimate \eqref{yestimate2} we deduce
\begin{align*}
 \E\left[ \sup_{t \in [0,T]} |Y_t|^{2p} \right]
& = \E^{\Q} \left[ \mathcal E (-H \cdot \hat M)_T \sup_{t \in [0,T]} |Y_t|^{2p} \right]
\\
& \leq \E^{\Q} \left[ \mathcal E (-H \cdot \hat M)_T^{w'} \right]^{\frac{1}{w'}} \E^{\Q} \left[ \sup_{t \in [0,T]} |Y_t|^{2pv'} \right]^{\frac{1}{v'}}
\\
& \leq c \left( \E^{\Q}  \left[ |\zeta|^{2pv'rk} + \left(\int_0^{T}  |f(s,0,0)| dC_s \right)^{2pv'rk} \right]^{\frac{1}{rk}} \right)^{\frac{1}{v'}}
\\
& \leq c \E  \left[ \mathcal E(H \cdot M)^w \right]^{\frac{1}{w}} \E \left[ |\zeta|^{2pvv'rk} + \left(\int_0^{T}  |f(s,0,0)| dC_s \right)^{2pvv'rk} \right]^{\frac{1}{rkvv'}} .
\end{align*}
Setting $q=vv'rk$ and treating estimate \eqref{zestimate} similarly gives the desired result. Remark that since $N$ does not appear in the definition of the measure $\Q$, Step 4 of this proof is exactly the same than the the same step in the proof of \cite[Lemma A.1]{IRR}.
\end{Proof}
\vspace{-1cm}
\section*{Acknowledgments}
The authors is very grateful to DFG research center Matheon project E2 for financial support. 

\small

\end{document}